\newtheorem{theorem}{Theorem}
\newtheorem{Example}{Example}
\newtheorem{proposition}{Proposition}
\newtheorem{lemma}{Lemma}
\newtheorem{remark}{Remark}
\newcommand{\zz}{\mathbf z}
\newcommand{\ww}{\mathbf w}
\title{Topology of the Milnor fibrations of polar weighted homogeneous polynomials}
\author{Kazumasa Inaba} 
\address{Mathematical Institute, Tohoku University, Sendai 980-8578, Japan}
\email{sb0d02@math.tohoku.ac.jp}
\begin{document}
\renewcommand{\thefootnote}{\fnsymbol{footnote}}
\footnote[0]{2010\textit{ Mathematics Subject Classification}.
Primary 32S55; Secondary: 57M25, 32S30, 32S40.}

\footnote[0]{\textit{Key words and phrases}. 
round handle, Seifert link, polar weighted homogeneous.}

\maketitle


\begin{abstract}
Let $P$ be a $2$-variable polar weighted homogeneous polynomial 
and let $F_t$ be a deformation of $P$ which is also a polar weighted homogeneous polynomial. 
If $\lvert F_{t}\rvert$ is a Morse function on the orbit space of the $S^1$-action, 
then the handle decomposition obtained by this Morse function induces 
a round handle decomposition of the Milnor fibration of $F_t$. 
In the present paper, we describe a round handle decomposition of the Milnor fibration of $F_t$ concretely and 
give the number of round handles by 
the number of positive and negative components of the links of singularities appearing before and after the deformation. 
We also give a formula of characteristic polynomials of these singularities 
by using the decomposition of the monodromy of the Milnor fibration induced by a round handle decomposition. 
\end{abstract}


\section{Introduction}
We consider a polynomial 
of complex variables $\zz = (z_1, \dots, z_n)$ 
which is given by 
\[
P(\zz, \bar{\zz}) := \sum_{i = 1}^{m} c_{i}\zz^{\nu_{i}}\bar{\zz}^{\mu_{i}}, 
\]
where $c_{i} \in \Bbb{C}^*$, 
$\zz^{\nu_{i}} = z^{\nu_{i,1}}_1 \cdots z^{\nu_{i,n}}_n$ and 
$\bar{\zz}^{\mu_{i}} = \bar{z}_{1}^{\mu_{i,1}} \cdots \bar{z}_{n}^{\mu_{i,n}}$ 
for $\nu_{i} = (\nu_{i,1}, \dots, \nu_{i,n})$ and 
$\mu_{i} = (\mu_{i,1}, \dots, \mu_{i,n})$ respectively.  
Here $\bar{\zz}^{\mu_{i}}$ 
represents the complex conjugate of $\zz^{\mu_{i}} = z_{1}^{\mu_{i,1}} \cdots z_{n}^{\mu_{i,n}}$. 
We call $P(\zz, \bar{\zz})$ \textit{a mixed polynomial} of complex variables $\zz = (z_1, \dots, z_n)$. 
A point $\ww \in \Bbb{C}^{n}$ is called \textit{a mixed singular point of $P(\zz, \bar{\zz})$} 
if~the gradient vectors of $\Re P$ and $\Im P$ are linearly dependent at~$\ww$ \cite{O1, O2}. 
Suppose that $P(0,\dots, 0) = 0$ and $P$ has an isolated singularity at the origin. 
There exist positive real numbers $\varepsilon$ and $\delta$ with $\delta \ll \varepsilon \ll 1$ such that 
the map 
\[
P : D^{2n}_{\varepsilon} \cap P^{-1}(\partial D^{2}_{\delta}) \rightarrow \partial D^{2}_{\delta} 
\]
is a locally trivial fibration over $\partial D^{2}_{\delta}$, where 
$D^{2n}_{\varepsilon} = \{ \zz \in \Bbb{C}^{n} \mid \| \zz \| \leq \varepsilon \}$ and 
$D^{2}_{\delta} = \{ \eta \in \Bbb{C} \mid \lvert \eta \rvert \leq \delta \}$.  
This map is called \textit{the Milnor fibration of $P$ at the origin}. 

Ruas, Seade and Verjovsky \cite{RSV} and Cisneros-Molina \cite{C} introduced 
the following classes of mixed polynomials. 
Let $p_{1}, \dots, p_{n}$ and $q_{1}, \dots, q_{n}$ be integers such that 
$\gcd(p_1, \dots, p_n) = \gcd(q_1,\dots,q_n) = 1$. 
We define the~$S^1$-action and the~$\Bbb{R}^{*}$-action on~$\Bbb{C}^{n}$ as follows: 
\begin{equation*}
\begin{split}
s \circ \zz = (s^{p_{1}}z_{1}, \dots, s^{p_{n}}z_{n}), \ \ \ 
r \circ \zz = (r^{q_1}z_{1}, \dots, r^{q_n}z_{n}), 
\end{split}
\end{equation*}
where $s \in S^{1}$ and $r \in \Bbb{R}^{*}$. 
A mixed polynomial $P(\zz, \bar{\zz})$ is called \textit{a polar weighted homogeneous polynomial} 
if there exists a positive integer $d_p$ such that $P(\zz, \bar{\zz})$ satisfies 
\[
P(s^{p_{1}}z_{1}, \dots, s^{p_{n}}z_{n}, \bar{s}^{p_1}\bar{z}_{1}, \dots, \bar{s}^{p_1}\bar{z}_{n})
 = s^{d_p}P(\zz, \bar{\zz}), \ \ s \in S^{1}.  \notag \\
\]
The weight vector $(p_{1}, \dots, p_{n})$ is called \textit{the polar weight} and $d_p$ is called 
\textit{the polar degree}. 
Similarly $P(\zz, \bar{\zz})$ is called \textit{a~radial weighted homogeneous polynomial} 
if there exists a positive integer $d_{r}$ such that 
\[
P(r^{q_1}z_{1}, \dots, r^{q_n}z_{n}, r^{q_1}\bar{z}_{1}, \dots, r^{q_n}\bar{z}_{n}) 
= r^{d_{r}}P(\zz, \bar{\zz}), \ \ r \in \Bbb{R}^{*}. 
\]
The integer $d_r$ is called \textit{the radial degree}. 
If $P$ is  polar and radial weighted homogeneous,  
$P$ admits the global Milnor fibration $P : \Bbb{C}^{n} \setminus P^{-1}(0) \rightarrow \Bbb{C}^{*}$ 
and the monodromy of the Milnor fibration is given by the $S^1$-action, see for instance \cite{RSV, C, O1, O2}.

We study the topology of the Milnor fibration of a mixed polynomial $P$ by using a deformation of $P$. 
Here \textit{a deformation of $P$} is a~polynomial map 
$F : \Bbb{C}^{n} \times \Bbb{R} \rightarrow \Bbb{C}, (\zz, t) \mapsto F_{t}(\zz)$, with $F_{0}(\zz) = P(\zz, \bar{\zz})$. 
A deformation of $P$ is useful to study the Milnor fibration of $P$. 
For complex isolated singularities, 
it is known that there exist a~neighborhood $U$ of the origin of $\Bbb{C}^{n}$ and 
a~deformation $F_{t}$ of a complex polynomial $P(\zz)$ 
such that $F_{t}(\zz)$ is also a~complex polynomial and any singularity of $F_{t}(\zz)$ in $U$ 
is a Morse singularity for any $0 < t \ll 1$, see for instance \cite[Chapter 4]{Lg}. 
A sufficiently small compact neighborhood of each Morse singularity can be regarded as a $2n$-dimensional $n$-handle and 
thus we have a decomposition 
\[
D^{2n}_{\varepsilon} \cap F_{t}^{-1}(D^{2}_{\delta}) \cong 
\bigr(D^{2n}_{\varepsilon} \cap F_{t}^{-1}(D^{2}_{\delta_{t}})\bigl) \cup_{\varphi} \bigr(\sqcup_{i=1}^{\ell} (\text{$n$-handle})_{i}\bigl), 
\]
where $\ell$ is the Milnor number of the singularity of $P$ at $(0, \dots, 0)$, 
$\varphi = (\varphi_{1}, \dots \varphi_{\ell})$ is the $\ell$-tuple of the attaching map $\varphi_{i}$ of 
$(n\text{-handle})_i$ 
and $D^{2}_{\delta_{t}}$ is a $2$-disk centered at $0$ with radius $\delta_{t}$ 
such that $\delta_{t} < \delta$ and $F_{t} \mid_{F_{t}^{-1}(D^{2}_{\delta_{t}})}$ has no singularities. 
Note that the framing of each handle attaching is determined by the vanishing cycle of the corresponding Morse singularity \cite{K}. 
Such a decomposition induces a decomposition of the monodromy of the Milnor fibration into those of 
the Morse singularities. 

In this paper, we observe analogous deformations for mixed singularities. 
Let $P$ be a $2$-variable polar and radial weighted homogeneous 
polynomial which has an isolated singularity at the origin of $\Bbb{C}^{2}$ 
and let $F_t$ be a deformation of $P$. 
Set $S_{k}(F_{t}) = \{ \zz \in U \mid \text{rank}\>dF_{t}(\zz) = 2-k \}$ for $k = 1, 2$. 
We assume that $F_t$ satisfies the following properties: 
\begin{enumerate}
\item
$F_t$ is polar weighted homogeneous for any $0 \leq t \ll 1$, which implies that, for each $0 < t \ll 1$, 
the singular set $S_{1}(F_{t}) \cup S_{2}(F_{t})$ 
consists of the union of a finite number of orbits of the $S^1$-action \cite[Proposition 2]{In3} and 
$F_{t}(S_{1}(F_{t}))$ consists of circles centered at $0$ except the origin; 
\item
For each point $\ww \in S_{1}(F_{t})$, 
there exist local coordinates $(x_{1}, x_{2}, x_{3}, x_{4})$ centered at $\ww$  
such that $F_{t}$ is given by 
\[
(F_{t}/\lvert F_{t} \rvert, \lvert F_{t} \rvert) = \bigr(x_{1}, -x_{2}^{2}+x_{3}^{2}+x_{4}^{2}+c_{t, \ww}\bigl), 
\]
where $c_{t,\ww} = \lvert F_{t}(\ww)\rvert$ for $\ww \in S_{1}(F_{t})$ and $0 < t \ll 1$. 
In particular, $S_{1}(F_{t})$ consists of indefinite fold singularities; 
\item
$S_{2}(F_{t}) = \{ \mathbf{o}\}$ or $\emptyset$. 
\end{enumerate}
In \cite{In3}, we focused on the mixed singularity of $f\bar{g}$, 
where $f$ and $g$ are $2$-variable weighted homogeneous complex polynomials which have no common branches, 
and showed that there exists a deformation $F_t$ of $f\bar{g}$ 
such that $F_t$ satisfies the above conditions. 
It is important to notice that there exist polar weighted homogeneous polynomials which do not admit 
deformations into smooth maps with only Morse singularities, see \cite[Theorem 1]{In1}, \cite[Corollary 1 and Corollary 2]{In2}.

By the condition $(2)$, the absolute value $\lvert F_{t}\rvert$ of $F_t$ defines a Morse function on 
the orbit space $(D^{4}_{\varepsilon} \cap F_{t}^{-1}(D^{2}_{\delta}))/S^{1}$ of the $S^1$-action for $t>0$ and 
outside the image of the origin, and the indices of the Morse singularities are always $1$. 
Then the handle decomposition of the orbit space according to this Morse function induces a decomposition 
of $D^{4}_{\varepsilon} \cap F_{t}^{-1}(D^{2}_{\delta})$ into a tubular neighborhood of a singular fiber over 
the origin and a finite number of round $1$-handles, that is, we have 
\begin{equation}
\label{0.1}
\tag{0.1}
D^{4}_{\varepsilon} \cap F_{t}^{-1}(D^{2}_{\delta}) \cong 
\bigr(D^{4}_{\varepsilon} \cap F_{t}^{-1}(D^{2}_{\delta_{t}})\bigl) \cup_{\varphi} 
\bigr(\sqcup_{i=1}^{\ell} (\text{round $1$-handle})_{i}\bigl) 
\end{equation}
where $\ell$ is the number of the singularities of the Morse function on the orbit space outside the origin 
and $\varphi = (\varphi_{1}, \dots \varphi_{\ell})$ is the attaching map of $\ell$ copies of a 
round $1$-handle. 
Here we may assume that the images of $\varphi_{1}, \dots, \varphi_{\ell}$ in 
$\partial(D^{4}_{\varepsilon} \cap F_{t}^{-1}(D^{2}_{\delta_{t}}))$ are disjoint. 

In this paper, we describe the structure of this decomposition more precisely. To state our theorem, 
we introduce the notion of negative link components. 
Let $P$ be a polar weighted homogeneous polynomial. Then the link of $P$ at the origin is a Seifert link, 
denoted by $L(P, \mathbf{o})$. 
A fiber surface of the Seifert link induces an orientation to each link component canonically. 
A connected component of $L(P, \mathbf{o})$ is called a \textit{positive component} if the orientation of the link 
component coincides with that of the $S^1$-action, and otherwise it is called a \textit{negative component}. 
Let $\lvert L^{+}(P, \mathbf{o}) \rvert$ and $\lvert L^{-}(P, \mathbf{o}) \rvert$ 
denote the number of positive components of $L(P, \mathbf{o})$ and 
the number of negative components of $L(P, \mathbf{o})$, respectively. 
Then the decomposition is given as follows:

\begin{theorem}\label{rthm}
Let $P$ be a $2$-variable polar and radial weighted homogeneous 
polynomial which has an isolated singularity at the origin and 
let $F_t$ be a deformation of $P$ satisfying the conditions $(1), (2)$ and $(3)$. 
Then 
\renewcommand{\theenumi}{\roman{enumi}}
\begin{enumerate}
\item
$D^{4}_{\varepsilon} \cap F_{t}^{-1}(D^{2}_{\delta_{t}})$ is diffeomorphic to 
the disjoint union of a $4$-ball and $\ell$ copies of $S^{1} \times B^{3}$, 
where $B^3$ is a $3$-ball, and each $\varphi_{i}$ of the attaching map 
$\varphi = (\varphi_{1}, \dots, \varphi_{\ell})$ 
maps the two attaching regions of the $i$-th round $1$-handle to 
both of the boundary of the $4$-ball and that of the $i$-th $S^1 \times B^3$, 
and these $\ell+1$ connected components are connected by $\ell$ round $1$-handles attached by the map $\varphi$; and
\item
the number $\ell$ of round $1$-handles in the decomposition $\eqref{0.1}$ is given as 
\[
\ell = \lvert L^{+}(P, \mathbf{o})\rvert - \lvert L^{+}(F_{t}, \mathbf{o})\rvert = \lvert L^{-}(P, \mathbf{o})\rvert - \lvert L^{-}(F_{t}, \mathbf{o})\rvert 
\]
for $0 < t \ll 1$. 
\end{enumerate}
\end{theorem}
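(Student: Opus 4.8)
The plan is to deduce the decomposition from Morse theory on the $S^1$-orbit space and then match the two ends of the resulting filtration against the Seifert links of $P$ and of $F_t$. For (i), I would pass to the orbit space $V = (D^4_\varepsilon \cap F_t^{-1}(D^2_\delta))/S^1$ and study $\phi = |F_t|$ there. By condition $(2)$ the coordinate $x_1$ of the normal form is the orbit (phase) direction, so after quotienting it disappears and $\phi$ becomes a Morse function whose critical points are exactly the images of the fold orbits, each of index $1$ since the remaining Hessian is $-x_2^2 + x_3^2 + x_4^2$. Fixing $\delta_t$ below every critical value $c_{t,\mathbf{w}}$, the sublevel set $\phi^{-1}([0,\delta_t])$ retracts onto $\phi^{-1}(0)$, so $N_{\delta_t} = D^4_\varepsilon \cap F_t^{-1}(D^2_{\delta_t})$ is an $S^1$-invariant regular neighborhood of the zero set $Z = F_t^{-1}(0)\cap D^4_\varepsilon$. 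Because $F_t$ is only polar (not radial) weighted homogeneous, $Z$ need not be a cone: it splits into the branch through the fixed point $\mathbf{o}$, whose invariant neighborhood is contractible with boundary $S^3$ and hence a $4$-ball, together with finitely many components lying in the free part of the action, each an annular orbit family whose invariant neighborhood is $S^1\times B^3$. This yields the list of pieces.

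To pin down the attaching pattern I would argue combinatorially and then locally. Since $N$ is connected and is reassembled from these $\ell+1$ pieces by the $\ell$ round $1$-handles coming from the index-$1$ critical points, the graph whose vertices are the pieces and whose edges are the handles is a tree. Feeding the indefinite-fold normal form of condition $(2)$ into the passage across a single value $c_{t,\mathbf{w}}$, I would verify that crossing one critical value fuses exactly one annular piece into the component containing $\mathbf{o}$ and never fuses two annular pieces directly; this forces the tree to be the star in which every round $1$-handle runs from one $S^1\times B^3$ to the central $4$-ball, as asserted. For (ii), the count $\ell$ is then simultaneously the number of index-$1$ critical points and the number of annular components of $Z$, and the task is to read this number off the links. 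At the outer scale the deformation is topologically trivial, so $N \cong D^4_\varepsilon \cap P^{-1}(D^2_\delta)$ and $L(P,\mathbf{o})$ is recovered from the outer boundary, while $L(F_t,\mathbf{o})$ is recovered from $\partial N_{\delta_t}$. I would translate ``positive'' and ``negative'' into orientation data: the fiber surface orients each boundary circle, and the sign records agreement or disagreement with the $S^1$-orbit orientation. The key structural fact is that each annular component of $Z$ is two-sided for the fiber surface, so its two boundary orbits receive opposite boundary orientations; tracking this through the fusion described in (i) should show that each round $1$-handle accounts for one unit of $|L^+(P,\mathbf{o})| - |L^+(F_t,\mathbf{o})|$ and, at the same time, one unit of $|L^-(P,\mathbf{o})| - |L^-(F_t,\mathbf{o})|$, so that summing over the $\ell$ handles gives both equalities.

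The main obstacle is this last orientation bookkeeping. The identification of the pieces and the star pattern in (i) are essentially Morse-theoretic and I expect them to go through cleanly, but showing that a single indefinite fold changes the positive and the negative component counts by \emph{the same} amount requires an orientation-sensitive comparison of the Seifert framing with the boundary orientation induced by the fiber surface. Here the explicit normal form of condition $(2)$ is the decisive tool, since it fixes how the two boundary orbits created along each annular piece sit relative to the $S^1$-action; isolating this local sign computation and checking its compatibility with the global fiber orientation is where the real work lies.
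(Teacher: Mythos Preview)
Your plan for (i) runs parallel to the paper's: pass to the orbit space, use $|F_t|$ as a Morse function, and identify the sublevel pieces as one $4$-ball and several $S^1\times B^3$'s. Two steps in your outline are not justified, though. First, asserting that the dual graph is a tree presumes that no $1$-handle attaches both feet to the same piece; the paper proves this by a global genus argument (Lemma~\ref{l2}): the orbit space of $\partial D^4_\varepsilon$ is a $2$-sphere, so the boundary of the final orbit space is a holed sphere, and a $1$-handle with both feet on one component would raise the boundary genus irreversibly. Second, your claim that the indefinite-fold normal form forces the star pattern (``never fuses two annular pieces directly'') cannot come from the local model alone: the normal form is symmetric in its two descending directions and carries no information about which global components they land in. This is a genuine gap; the paper does not try to extract the pattern locally.

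The substantial divergence is in (ii). You propose to track, for each annular component of $F_t^{-1}(0)$, how its two boundary orbits inherit opposite signs from the fiber-surface orientation, and you rightly flag this as the hard point. The paper bypasses that computation entirely. Each round $1$-handle replaces $2d_p$ disks in the fiber by $d_p$ annuli, so $\chi(S_\ell)-\chi(S_0)=-2\ell d_p$. Both $S_\ell$ (the Milnor fiber of $P$) and $S_0^0$ (that of $F_t$ at $\mathbf{o}$) are fibers of Seifert links for the same $S^1$-action, so the Eisenbud--Neumann formula gives their Euler characteristics in terms of $(m,n)=(|L^+(P,\mathbf{o})|,|L^-(P,\mathbf{o})|)$ and $(m',n')$. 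Since the polar degree is unchanged by the deformation, $d_p=pq(m-n)=pq(m'-n')$, so $m-n=m'-n'$; feeding this into the Euler-characteristic identity yields $(m+n)-(m'+n')=2\ell$, and the two equations together give $m-m'=n-n'=\ell$. Thus the equality of the positive and negative drops is forced by the polar-degree constraint, not by any local sign check, and the ``main obstacle'' you anticipate simply does not arise in the paper's argument.
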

As we mentioned, in \cite{In3}, a deformation of a mixed singularity of type $(f\bar{g}, \mathbf{o})$ is given explicitly. 
In that case, the number $\ell$ is determined by the polar degree and the radial degree of $P$ as 
$\ell = \frac{d_{r} - d_{p}}{2pq}$. 
From the decomposition in $\eqref{0.1}$, we can observe that the Milnor fiber of $(P, \mathbf{o})$ is obtained from 
the Milnor fiber of $(F_{t}, \mathbf{o})$ by removing $2d_{p}$ disks from 
two connected components of $D^{4}_{\varepsilon} \cap F_{t}^{-1}(D^{2}_{\delta_{t}})$ 
and gluing these boundary circles by $d_p$ annuli for each $i = 1, \dots \ell$. 
Moreover, we see that monodromy exchanges these $\ell$ copies of the union of $d_p$ annuli 
and $2d_{p}$ disks cyclically. 

This paper is organized as follows. In Section $2$ 
we give the definitions of fold singularities and round handles and 
introduce deformations of polar weighted homogeneous polynomials. 
In Section $3$ we prove Theorem $1$. In Section $4$ we make a few comments on the monodromy of the Milnor fibration of $F_t$ 
and a specific deformation of $f\bar{g}$. 

The author would like to thank Professor Masaharu Ishikawa for precious comments.

\section{Preliminaries}
\subsection{Fold singularities}
Let $X$ be a $n$-dimensional manifold and $W$ be a $2$-dimensional manifold. 
We denote $C^{\infty}(X, W)$ the set of smooth maps from $X$ to $W$. 
It is known that the subset of smooth maps from $X$ to $W$ whose singularities are 
only definite fold singularities, indefinite fold singularities or cusps 
is open and dense in $C^{\infty}(X, W)$ topologized with 
the $C^{\infty}$-topology \cite{L1}. 
Here \textit{a fold singularity} is the singularity of the following map 
\[
(x_{1}, \dots, x_{n}) \mapsto (x_{1}, \sum_{j=2}^{n}\pm x_{j}^{2}), 
\] 
where $(x_{1}, \dots, x_{n})$ are coordinates of a small neighborhood of the singularity. 
If the coefficients of $x_j$ for $j = 2, \dots, n$ is 
either all positive or all negative, we say that 
$x$ is a \textit{definite fold} singularity, 
otherwise it is an \textit{indefinite fold} singularity.

\subsection{Deformations of polar weighted homogeneous polynomials}
Let $P : \Bbb{C}^{2} \rightarrow \Bbb{C}$ be a polar weighted homogeneous polynomial map which has an isolated singularity at the origin of $\Bbb{C}^2$. 
Then $P$ admits a Milnor fibration, i.e., there exist positive real numbers $\varepsilon$ and $\delta$ such that 
the map 
\[
P : D^{4}_{\varepsilon} \cap P^{-1}(\partial D^{2}_{\delta}) \rightarrow \partial D^{2}_{\delta} 
\]
is a locally trivial fibration over $\partial D^{2}_{\delta}$, where 
$D^{4}_{\varepsilon} = \{ \zz \in \Bbb{C}^{2} \mid \| \zz \| \leq \varepsilon \}$ and 
$D^{2}_{\delta} = \{ \eta \in \Bbb{C} \mid \lvert \eta \rvert \leq \delta \}$. 
We fix such  positive real numbers $\varepsilon$ and $\delta$. 
Let $F$ be the fiber surface of a polar weighted homogeneous polynomial $P$. 
In \cite{O1, O2}, the monodromy $h : F \rightarrow F$ of the Milnor fibration of $P$ is given by 
\[
(z_{1}, z_{2}) \mapsto \Bigr(\exp\Bigr(\frac{2p\pi i}{d_p}\Bigl)z_{1}, \exp\Bigr(\frac{2q\pi i}{d_p}\Bigl)z_{2}\Bigl), 
\] 
where $(p,q)$ is the polar weight of $P$. 
Note that the link $K_{P} = \partial D^{4}_{\varepsilon} \cap P^{-1}(0)$ is an invariant set of the $S^1$-action. 
So the link $K_{P}$ is a Seifert link in the $3$-sphere \cite{EN}. 

Let $F_t$ be a deformation of $P$ which satisfies the conditions $(1), (2)$ and $(3)$. 
Since the fiber surface $F_{0}^{-1}(c)$ intersects $\partial D^{4}_{\varepsilon}$ transversely, 
$F_{t}^{-1}(c)$ intersects $\partial D^{4}_{\varepsilon}$ transversely for each $c \in D^{2}_{\delta}$ 
and $0 \leq t \ll 1$. 
By the Ehresmann's fibration theorem \cite{W}, the map 
\[
F_{t} : D^{4}_{\varepsilon} \cap F_{t}^{-1}(\partial D^{2}_{\delta}) \rightarrow \partial D^{2}_{\delta}
\] 
is a locally trivial fibration for $0 \leq t \ll 1$. 
The polar weight of $F_t$ coincides with that of $F_{0}$ for $0 < t \ll 1$. 
Thus the monodromy of $F_{t} : D^{4}_{\varepsilon} \cap F_{t}^{-1}(\partial D^{2}_{\delta}) \rightarrow \partial D^{2}_{\delta}$ 
is given by the same $S^1$-action on $\Bbb{C}^{2}$ for each $0 \leq t \ll 1$. 

\begin{lemma}
The Milnor fibration 
$F_{t} : D^{4}_{\varepsilon} \cap F_{t}^{-1}(\partial D^{2}_{\delta}) \rightarrow \partial D^{2}_{\delta}$ 
is isomorphic to the fibration 
$F_{0}/\lvert F_{0} \rvert : \partial D^{4}_{\varepsilon} \setminus \emph{Int}N(K_{F_{0}}) \rightarrow S^{1}$ 
for $0 \leq t \ll 1$, 
where $N(K_{F_{0}}) = \{\zz \in \partial D^{4}_{\varepsilon} \mid \lvert F_{0}(\zz)\rvert \leq \delta\}$. 
\end{lemma}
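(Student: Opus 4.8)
The plan is to establish the isomorphism in two steps, using the tube fibration of $F_{0} = P$ as a bridge. First I would show that the tube fibration of $F_{t}$ is isomorphic to the tube fibration of $F_{0}$ for $0 < t \ll 1$ by a parametrized Ehresmann argument in the deformation parameter, and then I would identify the tube fibration of $F_{0}$ with the sphere-complement fibration $F_{0}/\lvert F_{0}\rvert$ by exploiting the radial weighted homogeneity of $P$. The composite of these two isomorphisms proves the lemma, and the fact that only $F_{0}$ is radial weighted homogeneous explains why the sphere version is phrased for $F_{0}$ rather than $F_{t}$.

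For the first step I would treat the whole deformation at once. Set
\[
E = \{(\zz, t) \in D^{4}_{\varepsilon} \times [0, t_{0}] \mid \lvert F_{t}(\zz)\rvert = \delta\}
\]
and consider the map $\Phi \colon E \to \partial D^{2}_{\delta} \times [0, t_{0}]$, $\Phi(\zz, t) = (F_{t}(\zz), t)$. As recorded above, $F_{t}^{-1}(c)$ meets $\partial D^{4}_{\varepsilon}$ transversely and $F_{t}$ is transverse to the tube $\partial D^{2}_{\delta}$, uniformly for $c \in D^{2}_{\delta}$ and $0 \leq t \ll 1$; hence $E$ is a compact manifold with corners and $\Phi$ is a proper submersion. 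Applying Ehresmann's fibration theorem in the $t$-direction, $E$ is a locally trivial product over $[0, t_{0}]$, so the fibration obtained by restricting $\Phi$ to the slice $\{t\}$ is isomorphic to the one at $t = 0$. The slice at $t$ is exactly the tube fibration $F_{t} \colon D^{4}_{\varepsilon} \cap F_{t}^{-1}(\partial D^{2}_{\delta}) \to \partial D^{2}_{\delta}$, and therefore the tube fibrations of $F_{t}$ and of $F_{0}$ are isomorphic.

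For the second step I would use the $\Bbb{R}^{*}$-action coming from the radial weighted homogeneity of $F_{0} = P$, namely $r \circ \zz = (r^{q_{1}}z_{1}, r^{q_{2}}z_{2})$ with $F_{0}(r \circ \zz) = r^{d_{r}}F_{0}(\zz)$. Since $r^{d_{r}} > 0$, this action preserves the argument $F_{0}/\lvert F_{0}\rvert$, while $\lvert F_{0}(r \circ \zz)\rvert = r^{d_{r}}\lvert F_{0}(\zz)\rvert$ is strictly increasing in $r > 0$. For a point $\zz$ in the tube $D^{4}_{\varepsilon} \cap \{\lvert F_{0}\rvert = \delta\}$ with $\|\zz\| < \varepsilon$, the function $\|r \circ \zz\|^{2} = \sum_{i} r^{2q_{i}}\lvert z_{i}\rvert^{2}$ is strictly increasing in $r$, so there is a unique $r(\zz) \geq 1$ with $\|r(\zz) \circ \zz\| = \varepsilon$, and $r(\zz)$ depends smoothly on $\zz$ by the implicit function theorem. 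The map $\zz \mapsto r(\zz) \circ \zz$ is then a diffeomorphism from the tube onto the sphere complement $\partial D^{4}_{\varepsilon} \setminus \text{Int}\,N(K_{F_{0}})$ that fixes the common boundary $\partial D^{4}_{\varepsilon} \cap \{\lvert F_{0}\rvert = \delta\}$; as it preserves $F_{0}/\lvert F_{0}\rvert$, it intertwines the two projections after identifying $\partial D^{2}_{\delta}$ with $S^{1}$ via $\eta \mapsto \eta/\lvert\eta\rvert$. This gives the desired isomorphism between the tube fibration of $F_{0}$ and $F_{0}/\lvert F_{0}\rvert$; alternatively one may invoke the equivalence for polar and radial weighted homogeneous polynomials established in \cite{RSV, C, O1, O2}.

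The step I expect to require the most care is the corner analysis where the tube $\{\lvert F_{t}\rvert = \delta\}$ and the sphere $\partial D^{4}_{\varepsilon}$ meet: I must verify that transversality to the sphere and to the tube holds jointly and uniformly in $t$, so that $E$ is genuinely a manifold with corners and the parametrized Ehresmann theorem applies, and that the radial flow of the second step remains a diffeomorphism up to and along this corner locus. Both points reduce to the uniform transversality of $F_{t}$ to $\partial D^{4}_{\varepsilon}$ and to $\partial D^{2}_{\delta}$, which holds for $0 \leq t \ll 1$ by continuity from the transverse situation at $t = 0$, together with the positivity $q_{i} > 0$ and $d_{r} > 0$ that makes the flow time $r(\zz)$ well-defined and smooth.
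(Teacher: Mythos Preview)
Your proof is correct and follows the same two-step strategy as the paper: a parametrized Ehresmann argument in $t$ to identify the tube fibrations of $F_{t}$ and $F_{0}$, followed by the identification of the tube fibration of $F_{0}$ with the sphere-complement fibration. The only difference is cosmetic: for the second step the paper simply invokes \cite[Theorem 37]{O2}, whereas you spell out the radial-flow diffeomorphism explicitly (and then note the same citation as an alternative); your explicit argument is essentially the content of that cited theorem.
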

\begin{proof}
Since the fiber surface $F_{0}^{-1}(c)$ is transversal to $\partial D^{4}_{\varepsilon}$, 
$F_{t}^{-1}(c)$ is transversal to $\partial D^{4}_{\varepsilon}$ for any $c \in D^{2}_{\delta}$ 
and $0 \leq t \ll 1$. 
Fix such a positive real number $t$. 
We set 
\[\begin{split}
\partial \mathcal{E}(\delta, \varepsilon)& := \{(\zz, t') \in \Bbb{C}^{2} \times [0, t] \mid \lvert F_{t'}(\zz)\rvert = \delta, \| \zz \| \leq \varepsilon \}\\
\partial ^2\mathcal{E}(\delta, \varepsilon)& := \{(\zz, t') \in \Bbb{C}^{2}\times [0, t] \mid \lvert F_{t'}(\zz)\rvert = \delta, \| \zz \| = \varepsilon \}.
\end{split}
\]
Then the projection 
\[
\pi' :( \partial \mathcal{E}(\delta, \varepsilon),\partial^2\mathcal{E}(\delta, \varepsilon)) \rightarrow [0, t], \ \ 
(\zz, t') \mapsto t'
\]
is a~proper submersion. 
By the Ehresmann's fibration theorem \cite{W}, 
$\pi'$ is a~locally trivial fibration over $[0, t]$. 
Thus the projection 
$\pi'$ induces 
a~family of isomorphisms $\psi_{t'} : \partial E_{0}(\delta, \varepsilon) \rightarrow \partial E_{t'}(\delta, \varepsilon)$ 
of fibrations 
such that the following diagram is commutative for $0 \leq t' \leq t$: 
\def\mapright#1{\smash{\mathop{\longrightarrow}\limits^{{#1}}}}
\def\mapdown#1{\Big\downarrow\rlap{$\vcenter{\hbox{$#1$}}$}}
\[\begin{matrix}
\partial E_{0}(\delta, \varepsilon)&\mapright{\psi_{t'}}& \partial E_{t'}(\delta, \varepsilon) \\
\mapdown{F_{0}}&&\mapdown{F_{t'}}\\
S^{1}_{\delta}&=& S^{1}_{\delta} 
\end{matrix},\]
where $\partial E_{t'}(\delta, \varepsilon) = 
\{\zz \in \Bbb{C}^{2} \mid \lvert F_{t'}(\zz)\rvert = \delta, \| \zz \| \leq \varepsilon \}$. 
Thus the two fibrations 
$F_{t'} : D^{4}_{\varepsilon} \cap F_{t'}^{-1}(\partial D^{2}_{\delta}) \rightarrow \partial D^{2}_{\delta}$ and 
$F_{0} : D^{4}_{\varepsilon} \cap F_{0}^{-1}(\partial D^{2}_{\delta}) \rightarrow \partial D^{2}_{\delta}$ 
are isomorphic for $0 \leq t' \leq t$. 
By \cite[Theorem 37]{O2}, the two fibrations 
\[
F_{0} : \partial E_{0}(\delta, \varepsilon) \rightarrow S^{1}_{\delta}, \ \ \
F_{0}/\lvert F_{0}\rvert : S^{3}_{\varepsilon} \setminus \text{Int}N(K_{F_{0}}) \rightarrow S^{1} 
\]
are isomorphic for $\varepsilon > 0, \delta > 0$ and $\delta \ll \varepsilon$. 
Thus $F_{t} : D^{4}_{\varepsilon} \cap F_{t}^{-1}(\partial D^{2}_{\delta}) \rightarrow \partial D^{2}_{\delta}$ 
is isomorphic to 
$F_{0}/\lvert F_{0} \rvert : \partial D^{4}_{\varepsilon} \setminus \text{Int}N(K_{F_{0}}) \rightarrow S^{1}$ 
for $0 \leq t \ll 1$. 
\end{proof}

\begin{lemma}\label{c1}
The orbit space of  
$D^{4}_{\varepsilon} \cap F_{t}^{-1}(\partial D^{2}_{\delta})$ 
of the $S^1$-action is homeomorphic to 
a holed $2$-sphere for $0 \leq t \ll 1$. 
\end{lemma}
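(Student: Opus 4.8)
The plan is to move the computation to the sphere model of the Milnor fibration, where the $S^1$-action becomes the standard Seifert action on $S^3$ and the quotient is visibly a punctured sphere. First I would record that $S^1$ genuinely acts on $M_t := D^{4}_{\varepsilon} \cap F_{t}^{-1}(\partial D^{2}_{\delta})$: since $|s|=1$ the action $s\circ\zz$ preserves the norm $\|\zz\|$, and polar weighted homogeneity gives $\lvert F_t(s\circ\zz)\rvert=\lvert s^{d_p}F_t(\zz)\rvert=\lvert F_t(\zz)\rvert$, so both defining conditions $\|\zz\|\le\varepsilon$ and $\lvert F_t\rvert=\delta$ are invariant and the quotient $M_t/S^1$ is defined.

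Next I would upgrade Lemma 1 to an equivariant statement. Both $F_t$ and $F_0/\lvert F_0\rvert$ intertwine the $S^1$-action with the rotation $s\mapsto s^{d_p}$ of their base circle, so it suffices to choose the comparison diffeomorphisms equivariantly: in the Ehresmann step of Lemma 1 one takes an $S^1$-invariant lift of $\partial/\partial t'$ (obtained by averaging over the compact group $S^1$, which keeps the projection to $[0,t]$ unchanged), and the equivalence of the tube and sphere models from \cite[Theorem 37]{O2} is likewise realized by an invariant flow. Hence the isomorphism of Lemma 1 may be assumed $S^1$-equivariant, and therefore induces a homeomorphism of orbit spaces
\[
M_t/S^1 \;\cong\; \bigl(S^{3}_{\varepsilon}\setminus \mathrm{Int}\,N(K_{F_{0}})\bigr)/S^1,
\]
where $S^{3}_{\varepsilon}=\partial D^{4}_{\varepsilon}$.

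Then I would analyze the right-hand side by Seifert geometry. The weighted circle action $s\circ\zz=(s^{p_{1}}z_{1},s^{p_{2}}z_{2})$ on $S^{3}_{\varepsilon}$ is a Seifert fibration whose orbit space is topologically $S^{2}$; since $\gcd(p_1,p_2)=1$, the only non-free orbits are the two coordinate circles $z_1=0$ and $z_2=0$, which merely become cone points of the quotient orbifold and do not change its underlying topological type. The link $K_{F_{0}}$ is invariant under the action and, being a Seifert link, is a union of finitely many fibers; an invariant tubular neighborhood of each such fiber is a fibered solid torus whose image in $S^{2}$ is an embedded closed disk, and after shrinking these disks may be taken pairwise disjoint. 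Removing the interiors of the neighborhoods therefore deletes finitely many disjoint open disks from $S^{2}$, and since $K_{F_0}\neq\emptyset$ at least one disk is removed; the quotient is a compact genus-$0$ surface with nonempty boundary, that is, a holed $2$-sphere, for all $0\le t\ll 1$.

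The main obstacle I anticipate is the equivariance bookkeeping in the second step: Lemma 1 as stated only asserts an isomorphism of fibrations, so I must verify that the diffeomorphisms it produces (the Ehresmann parallel transport and the tube-to-sphere comparison of \cite[Theorem 37]{O2}) can \emph{simultaneously} be arranged to commute with the $S^1$-action, which is what licenses passing to the quotient. Once that is in place, the remainder is the standard fact that deleting saturated neighborhoods of finitely many fibers from a Seifert fibration of $S^{3}$ yields a punctured base sphere.
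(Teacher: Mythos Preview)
Your proposal is correct and follows essentially the same route as the paper: pass via Lemma~1 to the sphere model $\partial D^{4}_{\varepsilon}\setminus \mathrm{Int}\,N(K_{F_0})$ and then read off the quotient from the Seifert structure on $S^{3}$. The paper's proof is terser and simply asserts that Lemma~1 yields a homeomorphism of orbit spaces (backed by the remark that the monodromy is given by the same $S^{1}$-action for every $t$), whereas you make the equivariance of the comparison explicit via averaging---a welcome clarification of a step the paper leaves implicit.
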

\begin{proof}
The monodromy of $F_{t} : D^{4}_{\varepsilon} \cap F_{t}^{-1}(\partial D^{2}_{\delta}) \rightarrow \partial D^{2}_{\delta}$ 
is given by the same $S^1$-action on $\Bbb{C}^{2}$ for each $0 \leq t \ll 1$. 
By Lemma $1$, $(D^{4}_{\varepsilon} \cap F_{t}^{-1}(\partial D^{2}_{\delta}))/S^{1}$ is homeomorphic to 
$(\partial D^{4}_{\varepsilon} \setminus \text{Int}N(K_{F_{0}}))/S^{1}$.

Since the orbit space $\partial D^{4}_{\varepsilon}/S^{1}$ is homeomorphic to a $2$-sphere and 
$K_{F_{0}}$ is an invariant set of the $S^1$-action, 
the orbit space $(D^{4}_{\varepsilon} \cap F_{t}^{-1}(\partial D^{2}_{\delta}))/S^{1}$ is a holed $2$-sphere.
\end{proof}

\subsection{Round handles}
Let $X$ and $Y$ be $n$-dimensional smooth manifolds. 
According to \cite{As, M}, we say that \textit{$X$ is obtained from $Y$ by attaching a round $k$-handle} if 
\begin{enumerate}
\item there are disk bundles over $S^{1}$, $E_{s}^{k}$ and $E_{u}^{n-k-1}$, \\
\item there exists an embedding $\varphi : \partial E_{s}^{k} \times_{S^{1}} E_{u}^{n-k-1} \rightarrow \partial Y$ 
such that $X \cong Y \cup_{\phi} E_{s}^{k} \oplus E_{u}^{n-k-1}$, 
\end{enumerate}
where $E_{s}^{k} \oplus E_{u}^{n-k-1}$ is the Whitney sum of $E_{s}^{k}$ and $E_{u}^{n-k-1}$ over $S^1$. 
The bundle $E_{s}^{k} \oplus E_{u}^{n-k-1}$ over $S^1$ is called 
\textit{an $n$-dimensional round $k$-handle} and $\varphi$ is called 
\textit{the attaching map of $E_{s}^{k} \oplus E_{u}^{n-k-1}$}. 
Note that a sufficiently small compact neighborhood of a connected component of the set of fold singularities 
can be regarded as an $n$-dimensional round handle. 
In our case, a sufficiently small compact neighborhood of each connected component of $S_{1}(F_{t})$ 
is regarded as a $4$-dimensional round $1$-handle.

\section{Proof of Theorem \ref{rthm}}
\subsection{Round $1$-handles determined by $S_{1}(F_{t})$}
By the condition $(3)$, the origin $\mathbf{o}$ is an isolated singularity of $F_t$. 
There exist positive real numbers 
$\varepsilon_{t}$ and $\delta_{t}$ such that $\delta_{t} \ll \varepsilon_{t}$ and the map 
\[
F_{t} : D^{4}_{\varepsilon'_{t}} \cap F_{t}^{-1}(\partial D^{2}_{\delta'_{t}}) \rightarrow \partial D^{2}_{\delta'_{t}} 
\]
is a locally trivial fibration over $\partial D^{2}_{\delta'_{t}}$, where 
$D^{4}_{\varepsilon'_{t}} = \{ \zz \in \Bbb{C}^{2} \mid \| \zz \| \leq \varepsilon'_{t} \}$ and 
$D^{2}_{\delta'_{t}} = \{ \eta \in \Bbb{C} \mid \lvert \eta \rvert \leq \delta'_{t} \}$ 
for $0 < \varepsilon'_{t} \leq \varepsilon_{t}, 0 < \delta'_{t} \leq \delta_{t}$ and 
$\delta'_{t} \ll \varepsilon'_{t}$, see \cite{W} . 
Thus $F_{t}^{-1}(c)$ intersects $\partial D^{4}_{\varepsilon_{t}}$ transversely for any $c \in D^{2}_{\delta_{t}}$ 
and $0 \leq t \ll 1$. 
We assume that $\varepsilon_{t}$ and $\delta_{t}$ satisfy the following properties: 
\[
D^{4}_{\varepsilon_{t}} \cap S_{1}(F_{t}) = \emptyset, \ \ \ 
D^{2}_{\delta_{t}} \cap F_{t}(S_{1}(F_{t})) = \emptyset. 
\]
See Figure~\ref{fig1}.
\begin{figure}[h]
\centering
  \includegraphics[scale=1.1]{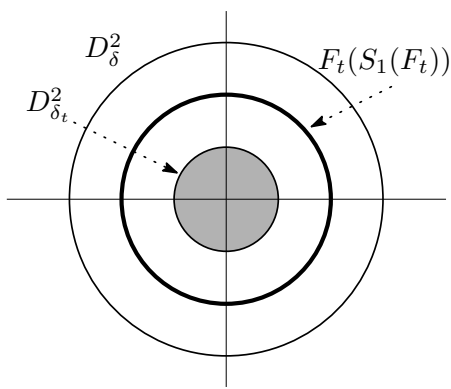}
  \caption{$D^{2}_{\delta_{t}}$ and $F_{t}(S_{1}(F_{t}))$}
  \label{fig1}
\end{figure}
Put $M_{0} = D^{4}_{\varepsilon} \cap F_{t}^{-1}(D^{2}_{\delta_{t}})$.

Fix $t$ with $0 < t \ll 1$ and 
let $\ell$ be the number of singularities of $\lvert F_{t} \rvert$. 
Note that $\lvert F_{t} \rvert$ is a Morse function by the conditions $(1)$ and $(2)$ except the origin. 
Let $C_{1}, \dots, C_{\ell}$ be the connected components of $S_{1}(F_{t})$, where 
the number of the connected components of $S_{1}(F_{t})$ is $\ell$ because of the conditions $(1), (2)$ and $(3)$. 
We may assume that $\lvert F_{t} \rvert$ satisfies 
\[
\lvert F_{t}(c_{1}) \rvert \leq \lvert F_{t}(c_{2}) \rvert \dots \leq \lvert F_{t}(c_{\ell}) \rvert
\]
for $c_{i} \in C_{i}$ and $i = 1, \dots, \ell$. 
Let $c'_{i}$ be the singularity of $\lvert F_{t} \rvert$ corresponding to $C_{i}$ and 
$N'_{i}$ be a sufficiently small compact neighborhood of $c'_{i}$ for $i = 1, \dots, \ell$. 
Since $\lvert F_{t} \rvert$ is a Morse function, each $N'_{i}, i = 1,\dots, \ell$, 
can be regarded as a $3$-dimensional $1$-handle $[-1, 1] \times D^{2}_{i}$, 
where $D^{2}_{i}$ is a $2$-disk. 
We set $M'_{0} = M_{0}/S^{1}$ and $M'_{i} := M'_{i-1} \cup_{\varphi'_{i}} N'_{i}$, 
where $\varphi'_{i} : \{\pm 1\} \times D^{2}_{i} \rightarrow \partial M'_{i-1}$ is the attaching map of $N'_{i}$ 
for $i = 1,\dots, \ell$. 
We may assume that $\varphi'_{i}(\{\pm 1\} \times D^{2}_{i}) \subset \partial M'_{0}$ for $i = 1,\dots, \ell$. 
Then the orbit space $D^{4}_{\varepsilon}/S^{1}$ is a topological manifold obtained from 
$M_{0}'$ by attaching $3$-dimensional $1$-handles $N'_{1}, \dots, N'_{\ell}$. 
Note that $D^{4}_{\varepsilon}/S^{1}$ is homeomorphic to a $3$-ball.

\begin{lemma}\label{l2}
Let $M_{0}^{*}$ be a connected component of $M'_{0}$. 
Then $\varphi'_{i}(\{\pm 1\} \times D^{2}_{i}) \not\subset \partial M_{0}^{*}$ for $i = 1,\dots, \ell$. 
\end{lemma}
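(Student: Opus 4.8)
The plan is to recast the statement as the claim that the ``attaching graph'' of the handle decomposition $D^{4}_{\varepsilon}/S^{1} = M'_{0} \cup_{\varphi'} (\sqcup_{i=1}^{\ell} N'_{i})$ has no self-loop, and to deduce this from the fact, already recorded above, that $D^{4}_{\varepsilon}/S^{1}$ is a $3$-ball and hence satisfies $H_{1}(D^{4}_{\varepsilon}/S^{1}) = 0$.

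First I would pass to a homotopy model. Each handle $N'_{i} \cong [-1,1] \times D^{2}_{i}$ deformation retracts onto its core arc $[-1,1] \times \{0\}$ relative to the two centers $\{\pm 1\} \times \{0\}$ of its attaching disks; since the images of $\varphi'_{1}, \dots, \varphi'_{\ell}$ are disjoint and contained in $\partial M'_{0}$, these retractions can be carried out simultaneously. This exhibits a deformation retraction of $D^{4}_{\varepsilon}/S^{1}$ onto $Z := M'_{0} \cup (\bigcup_{i=1}^{\ell} a_{i})$, where $a_{i}$ is the image in $D^{4}_{\varepsilon}/S^{1}$ of the core $[-1,1] \times \{0\}$ of $N'_{i}$, an arc whose two endpoints $\varphi'_{i}(\{\pm 1\} \times \{0\})$ lie on $\partial M'_{0}$. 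Next I would form the graph $\Gamma$ whose vertices are the connected components $M_{0}^{(1)}, \dots, M_{0}^{(c)}$ of $M'_{0}$ and whose edges are $a_{1}, \dots, a_{\ell}$, the edge $a_{i}$ joining the vertices that contain its two endpoints. A self-loop of $\Gamma$ is precisely a handle $N'_{i}$ with $\varphi'_{i}(\{\pm 1\} \times D^{2}_{i}) \subset \partial M_{0}^{*}$ for a single component $M_{0}^{*}$, so the lemma is equivalent to the statement that $\Gamma$ has no self-loop.

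To see that $\Gamma$ is a tree, I would collapse each component $M_{0}^{(j)}$ to a point to obtain a surjection $q : Z \to \Gamma$. Because each $M_{0}^{(j)}$ is path-connected, any cycle of edges in $\Gamma$ lifts to a $1$-cycle in $Z$ by joining consecutive endpoints inside the relevant components, so $q_{*} : H_{1}(Z) \to H_{1}(\Gamma)$ is onto. Since $Z$ is homotopy equivalent to the $3$-ball $D^{4}_{\varepsilon}/S^{1}$ we have $H_{1}(Z) = 0$, hence $H_{1}(\Gamma) = 0$; and $\Gamma$ is connected because $Z$ is. A connected graph with trivial first homology is a tree, so it has no self-loop, which is exactly the assertion. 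The only step needing care is the surjectivity of $q_{*}$ — that collapsing the connected pieces $M_{0}^{(j)}$ does not destroy the edge-cycles of $\Gamma$ — but this follows immediately from path-connectedness of the components, and the rest is formal. I expect this homological step to be the substantive point; it is what converts the purely topological input ``the orbit space is a $3$-ball'' into the combinatorial conclusion, and it is also what will feed into the tree structure asserted in part (i) of Theorem \ref{rthm}.
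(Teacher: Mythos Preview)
Your argument is correct but takes a genuinely different route from the paper's. The paper argues by contradiction on the \emph{boundary surface}: if some $N'_{i}$ had both attaching disks on a single component $M_{0}^{*}$ of $M'_{0}$, then attaching it would produce a boundary component of positive genus in $\partial M'_{i}$, and since attaching further $3$-dimensional $1$-handles never lowers the total genus of the boundary, $\partial M'_{\ell}$ would still have positive genus---contradicting Lemma~\ref{c1}, which identifies that boundary as a holed $2$-sphere. You instead work with the \emph{whole orbit space}, invoking the fact recorded just before the lemma that $D^{4}_{\varepsilon}/S^{1}$ is a $3$-ball, and then collapse the components of $M'_{0}$ to pass to the attaching graph $\Gamma$, reading off $H_{1}(\Gamma)=0$ from $H_{1}(Z)=0$. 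Your approach actually proves more than the lemma states: it shows $\Gamma$ is a tree, not merely that it has no self-loops, and this stronger conclusion is precisely what the component count $k+1-\ell=1$ in the proof of Lemma~\ref{l3} needs, so you are anticipating that step. The paper's route stays with surface topology and recycles Lemma~\ref{c1} directly; yours is a cleaner homological reduction that makes the combinatorics of the decomposition explicit.
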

\begin{proof}
Assume that there exist 
$i \in \{ 1, \dots, \ell \}$ and 
a connected component $M_{0}^{*}$ of $M'_{0}$ such that 
$\varphi'_{i}(\{\pm 1\} \times D^{2}_{i})$ is contained in $\partial M_{0}^{*}$. 
Then the genus of $\partial M'_{i}$ is greater than $0$. 
After attaching $1$-handles, the genus of the boundary of the orbit space does not decrease. 
Thus the genus of $\partial M'_{\ell}$ is greater than $0$. 
As Lemma \ref{c1}, the genus of $\partial M'_{\ell}$
is equal to $0$. This is a~contradiction. 
\end{proof}
Let $M_{i}$ and $N_i$ be $4$-dimensional manifolds such that 
$M_{i}/S^{1} = M'_{i}$ and $N_{i}/S^{1} = N'_{i}$ respectively for $i = 1, \dots, \ell$. 
Then $N_i$ can be regarded as a $4$-dimensional round $1$-handle and 
$M_{i}$ is a manifold obtained from $M_{i-1}$ by attaching $N_{i}$ for $i = 1, \dots, \ell$. 
By Lemma \ref{l2}, $N_{i}$ connects two connected components of $M_{0}$. 
Note that $M_{\ell}$ is diffeomorphic to 
$D^{4}_{\varepsilon} \cap F_{t}^{-1}(D^{2}_{\delta})$. 

\subsection{The fiber surface of $F_{t} : D^{4}_{\varepsilon_{t}} \cap F_{t}^{-1}(\partial D^{2}_{\delta_{t}}) \rightarrow \partial D^{2}_{\delta_{t}}$} 
We consider the restricted Milnor fibration  
$F_{t} : D^{4}_{\varepsilon} \cap F_{t}^{-1}(\partial D^{2}_{\delta_{t}}) \rightarrow \partial D^{2}_{\delta_{t}}$ and 
connected components of $M_0$. 

\begin{lemma}\label{l3} 
Let $S_{0}$ be the fiber surface of 
$F_{t} : D^{4}_{\varepsilon} \cap F_{t}^{-1}(\partial D^{2}_{\delta_{t}}) \rightarrow \partial D^{2}_{\delta_{t}}$. 
Then $S_{0}$ 
is diffeomorphic to the disjoint union of the fiber surface of 
$F_{t} \mid_{D^{4}_{\varepsilon_{t}} \cap F_{t}^{-1}(\partial D^{2}_{\delta_{t}})}$ and  
$\ell$ copies of an annulus, where $\ell$ is the number of connected components of $S_{1}(F_{t})$. 
\end{lemma}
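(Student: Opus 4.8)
The plan is to study $S_0 = F_t^{-1}(c)\cap D^4_\varepsilon$ (for $|c|=\delta_t$) component by component, according to the connected components of $M_0 = D^4_\varepsilon\cap F_t^{-1}(D^2_{\delta_t})$; since $F_t^{-1}(c)\subset F_t^{-1}(D^2_{\delta_t})$, every component of $S_0$ lies in a single component of $M_0$, so it suffices to determine the fiber over each component of $M_0$.

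First I would count the components of $M_0$. By the conditions $(2)$ and $(3)$, on the orbit space $D^4_\varepsilon/S^1$ the only critical value of $\lvert F_t\rvert$ in $[0,\delta_t]$ is $0$, attained at the image of the origin, because every fold critical value exceeds $\delta_t$. Hence every connected component of $M_0$ must meet $F_t^{-1}(0)$: otherwise $\lvert F_t\rvert$ would attain a positive minimum in the interior of that component, producing a critical value in $(0,\delta_t]$. Thus there is the component $M_0^{(0)}$ containing the origin, together with finitely many further components, each meeting $F_t^{-1}(0)\setminus\{\mathbf{o}\}$. To see that there are exactly $\ell$ of the latter, I would use that $M_\ell\cong D^4_\varepsilon\cap F_t^{-1}(D^2_\delta)$ is connected and is obtained from $M_0$ by attaching the $\ell$ round $1$-handles $N_1,\dots,N_\ell$, each of which, by Lemma \ref{l2}, joins two distinct components of $M_0$. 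Regarding the components of $M_0$ as vertices and the handles as edges gives a connected graph with $\ell$ edges; the planarity (genus $0$) of $\partial M'_\ell$ coming from Lemma \ref{c1}, together with the fact that attaching a $1$-handle inside a single component raises the boundary genus, forces this graph to be a tree, so $M_0$ has exactly $\ell+1$ components $M_0^{(0)},M_0^{(1)},\dots,M_0^{(\ell)}$.

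Next I would identify the fiber over each component. On the shell $\{\varepsilon_t\le\|\zz\|\le\varepsilon\}$ the map $F_t$ has no singular points and its fibers meet the spheres $\{\|\zz\|=r\}$ transversally, so Ehresmann's theorem gives a product structure that lets the collar of $M_0^{(0)}$ in the shell be absorbed; consequently the fibration $F_t:M_0^{(0)}\cap F_t^{-1}(\partial D^2_{\delta_t})\to\partial D^2_{\delta_t}$ is isomorphic to the inner fibration $F_t:D^4_{\varepsilon_t}\cap F_t^{-1}(\partial D^2_{\delta_t})\to\partial D^2_{\delta_t}$, and therefore $S_0\cap M_0^{(0)}$ is the inner fiber surface. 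For each $j\ge 1$, the restriction of $F_t$ to $M_0^{(j)}$ is a proper submersion onto $D^2_{\delta_t}$ (again no singular points, and transversality to $\partial D^4_\varepsilon$), so by Ehresmann's theorem it is a trivial fibration $M_0^{(j)}\cong V_j\times D^2_{\delta_t}$ with $V_j:=F_t^{-1}(0)\cap M_0^{(j)}$; in particular $S_0\cap M_0^{(j)}=F_t^{-1}(c)\cap M_0^{(j)}$ is diffeomorphic to $V_j$.

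It then remains to show that each $V_j$ is an annulus. The surface $V_j$ is compact, connected (being a deformation retract of the connected $M_0^{(j)}$) and $S^1$-invariant; the $S^1$-action on it is free because $M_0^{(j)}$ avoids the origin, so $V_j$ fibers in circles over a compact $1$-manifold. Since $V_j$ is orientable and has nonempty boundary lying on $\partial D^4_\varepsilon$ (the boundary circles being the orbits of $F_t^{-1}(0)$ on $\partial D^4_\varepsilon$ that split off under the deformation), this $1$-manifold is an interval and $V_j\cong S^1\times[0,1]$. Collecting the contributions, $S_0$ is diffeomorphic to the disjoint union of the inner fiber surface and $\ell$ annuli. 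I expect the main obstacle to be exactly this last identification: ruling out that some $V_j$ is a closed surface (a torus, which would give a closed interior component of $M_0$) and pinning the fiber down as an annulus rather than an arbitrary surface of Euler characteristic $0$. This needs the local $S^1$-equivariant form of $F_t$ near the split-off zero loops on $\partial D^4_\varepsilon$, together with a careful matching against the count established in the first step.
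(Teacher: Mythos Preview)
Your proposal is correct and follows essentially the same route as the paper: split $S_0$ according to the connected components $M_0^{(0)},\dots,M_0^{(k)}$ of $M_0$, identify the piece in $M_0^{(0)}$ with the inner Milnor fiber via the collar in the shell $D^4_\varepsilon\setminus\mathrm{Int}\,D^4_{\varepsilon_t}$, show that each remaining piece is an annulus via the free $S^1$-action on $F_t^{-1}(0)\cap M_0^{(j)}$, and deduce $k=\ell$ from Lemma~\ref{l2} together with the connectedness of $M_\ell$. The only cosmetic difference is that the paper rules out the torus case for $V_j$ by the same genus-$0$ contradiction used in Lemma~\ref{l2} (a torus fiber would force positive genus in $\partial M'_\ell$), whereas you argue directly that $V_j$ has nonempty boundary on $\partial D^4_\varepsilon$; both settle exactly the obstacle you flagged.
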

\begin{proof}
Let $M_{0}^{0}, M_{0}^{1}, \dots, M_{0}^{k}$ denote the connected components of $M_{0}$ such that 
$\mathbf{o} \in M_{0}^{0}$. 
Then $M_{0}^{0} \cap D^{4}_{\varepsilon_{t}} \neq \emptyset$. 
The restricted map $F_{t} : D^{4}_{\varepsilon_{t}} \cap F_{t}^{-1}(D^{2}_{\delta_{t}}) \rightarrow D^{2}_{\delta_{t}}$ 
has a~unique singularity at the origin $\mathbf{o}$ of $\Bbb{C}^{2}$.  
By \cite[Lemma 11.3]{Mi}, 
$D^{4}_{\varepsilon_{t}} \cap F_{t}^{-1}(\partial D^{2}_{\delta_{t}})$ 
is homeomorphic to $\partial D^{4}_{\varepsilon_{t}} \setminus \text{Int}N(K_{F_{t}})$, where 
$N(K_{F_{t}}) = \{\zz \in \partial D^{4}_{\varepsilon_{t}} \mid \lvert F_{t}(\zz)\rvert \leq \delta_{t} \}$. 
So any fiber surface of 
$F_{t}: D^{4}_{\varepsilon_{t}} \cap F_{t}^{-1}(\partial D^{2}_{\delta_{t}}) \rightarrow \partial D^{2}_{\delta_{t}}$ 
is connected. 
The boundary of the orbit space 
$(D^{4}_{\varepsilon_{t}} \cap M_{0}^{0})/S^{1}$ is homeomorphic to a~$2$-sphere and  
\[
M_{0}^{j} \cap D^{4}_{\varepsilon_{t}} = \emptyset 
\]
for $j = 1, \dots, k$. 

Let $S_{0}^{0}$ be a fiber surface of $F_{t}\mid_{M_{0}^{0} \cap (D^{4}_{\varepsilon_{t}} \cap F_{t}^{-1}(\partial D^{2}_{\delta_{t}}))}$. 
We divide the surface $S_{0}^{0}$ as follows: 
\[
S_{0}^{0} = (S_{0}^{0} \cap D^{4}_{\varepsilon_{t}}) \cup (S_{0}^{0} \cap (D^{4}_{\varepsilon} \setminus \text{Int$D^{4}_{\varepsilon_{t}}$})). 
\]
Since $F_{t} : (D^{4}_{\varepsilon} \setminus 
\text{Int$D^{4}_{\varepsilon_{t}}$}) \cap F_{t}^{-1}(D^{2}_{\delta_{t}}) \rightarrow D^{2}_{\delta_{t}}$ 
has no singularities and  
$F_{t}^{-1}(c)$ intersects $\partial D^{4}_{\varepsilon} \sqcup \partial D^{4}_{\varepsilon_{t}}$ 
transversely for any $c \in D^{2}_{\delta_{t}}$, 
$F_{t}^{-1}(c) \cap (D^{4}_{\varepsilon} \setminus \text{Int$D^{4}_{\varepsilon_{t}}$})$ 
is diffeomorphic to $F_{t}^{-1}(0) \cap (D^{4}_{\varepsilon} \setminus \text{Int$D^{4}_{\varepsilon_{t}}$})$. 
Note that $F_{t}^{-1}(0)$ is an invariant set of the $S^{1}$-action and 
$F_{t}^{-1}(0)/S^{1}$ is a $1$-dimensional algebraic set. 
The orbit space $F_{t}^{-1}(0)/S^{1}$ is diffeomorphic to $[0, 1]$. 
Thus the connected component of 
$F_{t}^{-1}(c) \cap (D^{4}_{\varepsilon} \setminus \text{Int$D^{4}_{\varepsilon_{t}}$})$ is diffeomorphic to an annulus. 
So any connected component of 
$S_{0}^{0} \cap (D^{4}_{\varepsilon} \setminus \text{Int$D^{4}_{\varepsilon_{t}}$})$ is an annulus. 
Since any fiber of $F_{t}$ intersects $\partial D^{4}_{\varepsilon_{t}}$ transversely, 
$S^{0}_{0} \cap \partial D^{4}_{\varepsilon_{t}}$ consists of circles and 
$S_{0}^{0} \cap (D^{4}_{\varepsilon} \setminus \text{Int$D^{4}_{\varepsilon_{t}}$})$ is diffeomorphic to 
$(S^{0}_{0} \cap \partial D^{4}_{\varepsilon_{t}}) \times [0, 1]$. 
So we have 
\begin{equation*}
\begin{split}
S_{0}^{0} &= (S_{0}^{0} \cap D^{4}_{\varepsilon_{t}}) \cup (S_{0}^{0} \cap (D^{4}_{\varepsilon} \setminus \text{Int$D^{4}_{\varepsilon_{t}}$})) \\ 
&\cong (S_{0}^{0} \cap D^{4}_{\varepsilon_{t}}) \cup ((S^{0}_{0} \cap \partial D^{4}_{\varepsilon_{t}}) \times [0, 1]) \\
&\cong S_{0}^{0} \cap D^{4}_{\varepsilon_{t}}.
\end{split} 
\end{equation*}

We consider $M_{0}^{j}$ for $j = 1, \dots, k$. 
The restricted map $F_{t} : M_{0}^{j} \rightarrow D^{2}_{\delta_{t}}$ has no singularities. 
For any $c \in D^{2}_{\delta_{t}} \setminus \{0\}$ 
and $j = 1, \dots, k$, 
$F_{t}^{-1}(c) \cap M_{0}^{j}$ is diffeomorphic to $F_{t}^{-1}(0) \cap M_{0}^{j}$. 
Since $F_{t}^{-1}(0)$ is an invariant set of the~$S^{1}$-action, 
the orbit space $F_{t}^{-1}(0)/S^{1}$ is a $1$-dimensional algebraic set. 
So $F_{t}^{-1}(0)/S^{1}$ is diffeomorphic to $[0, 1]$ or $S^{1}$. 
Assume that $F_{t}^{-1}(0)/S^{1} = S^{1}$. 
Then $F_{t}^{-1}(c)$ is a torus and 
the orbit space $F_{t}^{-1}(c)/S^{1}$ is also a torus. 
Since the boundary of $M_{\ell}'$ is a~$2$-sphere, 
this is a contradiction. 
Let $S_{0}^{j}$ denote the fiber surface of $F_{t}\mid_{M_{0}^{j}}$. 
Then $S_{0}^{j}/S^{1}$ is diffeomorphic to $[0, 1]$ and 
the fiber surface $S_{0}^{j}$ is diffeomorphic to an~annulus for $j = 1, \dots, k$. 

By Lemma \ref{l2}, each $N_{i}$ connects two connected components of $M_0$. 
Since $M_{\ell}$ is connected, we have $k+1-\ell = 1$. 
Thus the number of connected components of $S_0$ other than that of 
$F_{t} : D^{4}_{\varepsilon_{t}} \cap F_{t}^{-1}(\partial D^{2}_{\delta_{t}})$ 
is equal to $\ell$. 
\end{proof}

\begin{lemma}\label{c2}
The connected component $M_{0}^{0}$ of $M_{0}$ is diffeomorphic to a $4$-ball and 
$M_{0}^{j}$ is diffeomorphic to $S^1 \times B^{3}$, where $B^3$ is a $3$-ball, for $j =1, \dots, \ell$. 
\end{lemma}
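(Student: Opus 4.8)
The plan is to use the count $k=\ell$ already established in the proof of Lemma \ref{l3}, together with $\mathbf o\in M_0^0$ and $M_0^j\cap D^4_{\varepsilon_t}=\emptyset$ for $j\ge 1$, and to treat the two types of component separately: the components away from the origin by a direct fibration argument, and the component through the origin by reducing it to the total space of the local Milnor fibration of $(F_t,\mathbf o)$.

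First I would dispose of $M_0^j$ for $j=1,\dots,\ell$. By the choice of $\varepsilon_t$ and $\delta_t$ we have $S_1(F_t)\cap F_t^{-1}(D^2_{\delta_t})=\emptyset$, and $\mathbf o\notin M_0^j$, so $F_t$ has no singular point on $M_0^j$; moreover each fibre meets $\partial D^4_\varepsilon$ transversely. Hence $F_t|_{M_0^j}\colon M_0^j\to D^2_{\delta_t}$ is a proper submersion, submersive also on the boundary face $M_0^j\cap\partial D^4_\varepsilon$, and the Ehresmann fibration theorem \cite{W} makes it a locally trivial fibration over the contractible disk $D^2_{\delta_t}$, hence trivial. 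By Lemma \ref{l3} its fibre $S_0^j$ is an annulus, so
\[
M_0^j\cong D^2_{\delta_t}\times(S^1\times[0,1])\cong S^1\times(D^2_{\delta_t}\times[0,1])\cong S^1\times B^3 .
\]

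For $M_0^0$ the plan is first to strip off the part lying outside the small ball. On the shell $(D^4_\varepsilon\setminus\mathrm{Int}\,D^4_{\varepsilon_t})\cap F_t^{-1}(D^2_{\delta_t})$ the map $F_t$ has no singular points and its fibres meet both $\partial D^4_{\varepsilon_t}$ and $\partial D^4_\varepsilon$ transversely, so the very argument used for the fibre surface in the proof of Lemma \ref{l3} applies to the whole region and yields a collar $M_0^0\cap(D^4_\varepsilon\setminus\mathrm{Int}\,D^4_{\varepsilon_t})\cong(M_0^0\cap\partial D^4_{\varepsilon_t})\times[0,1]$. Since attaching a collar does not change the diffeomorphism type,
\[
M_0^0\cong M_0^0\cap D^4_{\varepsilon_t}=D^4_{\varepsilon_t}\cap F_t^{-1}(D^2_{\delta_t})=:E,
\]
the middle equality holding because $M_0^j\cap D^4_{\varepsilon_t}=\emptyset$ for $j\ge1$.

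It then remains to prove that $E$, the total space of the local Milnor fibration of $(F_t,\mathbf o)$ over $D^2_{\delta_t}$, is diffeomorphic to a $4$-ball. As a consistency check its boundary is already forced to be a $3$-sphere: it is the union of the tube $\partial D^4_{\varepsilon_t}\cap F_t^{-1}(D^2_{\delta_t})=N(K_{F_t})$ and the part $D^4_{\varepsilon_t}\cap F_t^{-1}(\partial D^2_{\delta_t})$, which by \cite[Lemma 11.3]{Mi} is diffeomorphic to $\partial D^4_{\varepsilon_t}\setminus\mathrm{Int}\,N(K_{F_t})$, and gluing the two along their common boundary recovers $\partial D^4_{\varepsilon_t}\cong S^3$; moreover $E$ deformation retracts onto the cone $F_t^{-1}(0)\cap D^4_{\varepsilon_t}$ and is contractible. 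The step I expect to be the main obstacle is upgrading this to an honest smooth diffeomorphism $E\cong D^4_{\varepsilon_t}$, since in dimension four ``contractible with boundary $S^3$'' does not by itself produce a smooth ball. I would avoid any classification result and instead exploit the standing hypothesis that $F_0=P$ is radial weighted homogeneous: the associated $\Bbb{R}^{*}$-action furnishes an explicit weighted-radial flow realizing $D^4_{\varepsilon_t}\cap P^{-1}(D^2_{\delta_t})\cong D^4_{\varepsilon_t}$ directly, and a parametrized Ehresmann argument over $t'\in[0,t]$, carried out on the complement of the common fixed point $\mathbf o$ and completed at $\mathbf o$ by the conical structure of the mixed isolated singularity, transports this diffeomorphism to $E$.
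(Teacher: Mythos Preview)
Your treatment of the components $M_0^j$ for $j\ge 1$ is correct and in fact cleaner than the paper's: you apply Ehresmann directly to $F_t|_{M_0^j}$ over the contractible disk and read off $M_0^j\cong D^2\times(\text{annulus})\cong S^1\times B^3$, whereas the paper passes to the $S^1$-orbit space, identifies it with $D^2\times[0,1]$, and then has to lift this back to a statement about $M_0^j$.

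The gap is in your argument for $M_0^0$. The parametrized Ehresmann step over $t'\in[0,t]$ does not go through, and the obstruction is not at the origin but at the fold locus. The radii $\varepsilon_t,\delta_t$ were chosen so that $S_1(F_t)\cap D^4_{\varepsilon_t}=\emptyset$ and $F_t(S_1(F_t))\cap D^2_{\delta_t}=\emptyset$ \emph{for the fixed value $t$ only}. For intermediate $t'\in(0,t)$ the fold circles $S_1(F_{t'})$ generically enter $D^4_{\varepsilon_t}$: since $F_0=P$ has an isolated singularity at $\mathbf o$ and $F_{t'}\to F_0$, the orbits in $S_1(F_{t'})$ collapse toward $\mathbf o$ as $t'\to 0$, so for every choice of $\varepsilon_t>0$ they sit inside $D^4_{\varepsilon_t}$ once $t'$ is small. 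Their critical values $|F_{t'}(S_1(F_{t'}))|$ then run from $0$ (in the limit $t'\to 0$) to a value $>\delta_t$ (at $t'=t$), hence cross $\delta_t$ at some $t'_0$. At that parameter the boundary piece $D^4_{\varepsilon_t}\cap|F_{t'_0}|^{-1}(\delta_t)$ is singular and the projection of your family $\mathcal E=\bigcup_{t'}E_{t'}$ to $[0,t]$ fails to be a submersion on the boundary stratum; the diffeomorphism type of $E_{t'}$ genuinely changes there. Removing $\mathbf o$ and invoking ``conical structure'' does not address this.

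The paper avoids the problem by varying the \emph{radius} $\delta'$ rather than the deformation parameter. At the fixed value $t$ one knows that $F_t$ is a submersion on all of $D^4_{\varepsilon_t}\setminus\{\mathbf o\}$; hence the tube fibrations $F_t:D^4_{\varepsilon_t}\cap F_t^{-1}(\partial D^2_{\delta'})\to\partial D^2_{\delta'}$ are isomorphic to the spherical fibration $F_t/|F_t|$ on $\partial D^4_{\varepsilon_t}\setminus K_{F_t}$ for \emph{every} $0<\delta'\le\delta_t$. This family of isomorphisms, produced by the usual Milnor flow, exhibits $D^4_{\varepsilon_t}\setminus\mathrm{Int}\,E$ as a collar on part of $\partial D^4_{\varepsilon_t}$ and gives $E\cong D^4_{\varepsilon_t}$ directly, after which your own collar argument passes from $E$ to $M_0^0$. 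The same idea fixes your proof: replace the Ehresmann in $t'$ by an Ehresmann in $\delta'$ at fixed $t$.
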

\begin{proof}
The two fibrations $F_{t}: D^{4}_{\varepsilon_{t}} \cap F_{t}^{-1}(\partial D^{2}_{\delta'}) \rightarrow \partial D^{2}_{\delta'}$ 
and $F_{t}/\lvert F_{t}\rvert : \partial D^{4}_{\varepsilon_{t}} \setminus (\partial D^{4}_{\varepsilon_{t}} \cap F_{t}^{-1}(0)) \rightarrow S^{1}$  
are isomorphic for any $0 < \delta' \leq \delta_{t}$. 
Thus $M_{0}^{0}$ is diffeomorphic to a $4$-ball. 

The map $F_{t}\mid_{M_{0}^{j}}$ has no singularities for $j \neq 0$. 
Then the fiber surface of 
$F_{t} : M_{0}^{j} \cap F_{t}^{-1}(\partial D^{2}_{\delta'}) \rightarrow \partial D^{2}_{\delta'}$
is diffeomorphic to $S_{0}^{j}$ for $0 < \delta' \leq \delta_{t}$. 
Since the monodromy of $F_{t} : M_{0}^{j} \cap F_{t}^{-1}(\partial D^{2}_{\delta'}) \rightarrow \partial D^{2}_{\delta'}$ 
is given by the $S^1$-action, 
the orbit space $(M_{0}^{j} \cap F_{t}^{-1}(D^{2}_{\delta_{t}} \setminus \{0\}))/S^{1}$ 
is homeomorphic to $(S_{0}^{j}/S^{1}) \times (0, 1]$. 
By Lemma \ref{l3}, $S_{0}^{j}/S^{1}$ is also an annulus. We identify $S_{0}^{j}/S^{1}$ with $S^{1} \times [0,1]$. 
Since $S^{1} \times (0, 1]$ is diffeomorphic to $D^{2}\setminus \{0\}$, 
$(M_{0}^{j} \cap F_{t}^{-1}(D^{2}_{\delta_{t}} \setminus \{0\}))/S^{1}$ is homeomorphic to 
$D^{2} \times [0,1] \setminus (\{0\}\times [0,1])$, where $D^2$ is a $2$-disk centered at $0$. 
Since $F_{t}^{-1}(0)$ is an invariant set of the $S^1$-action, 
the orbit space of $F_{t}^{-1}(0)$ is homeomorphic to $\{0\} \times [0, 1]$.
Thus the orbit space of $M_{0}^{j}$ is homeomorphic to $D^{2} \times [0,1]$. 
The manifold $M_{0}^{j}$ is diffeomorphic to $S^{1} \times B^{3}$. 
\end{proof}

\subsection{The number of connected components of $S_{1}(F_{t})$}
To complete the proof of Theorem~\ref{rthm}, 
it is enough to show the equality in Theorem \ref{rthm} $(\text{ii})$. 
We set $\tilde{M}_{0} = D^{4}_{\varepsilon} \cap F_{t}^{-1}(\partial D^{2}_{\delta_{t}})$ and 
$\tilde{M}_{i} = \tilde{M}_{i-1} \cup_{\varphi_{i}} \partial N_{i}$ for $i = 1, \dots, \ell$. 
Since $F_t$ is polar weighted homogeneous, 
the monodromy of $F_{t}\mid_{\tilde{M}_{i}}$ is given by the $S^{1}$-action on~$\Bbb{C}^{2}$. 
By the condition $(2)$, a fiber of $\lvert F_{t}\rvert : N'_{i} \rightarrow \Bbb{R}$ is as follows: 
\begin{equation*}
\lvert F_{t}\rvert^{-1}(\lvert u\rvert) \cap N'_{i} \cong \begin{cases}                                             
          \text{two $2$-disks} & 0 < c_{t,\ww} - \lvert u\rvert \ll 1 \\                                                         
          \text{an annulus} &  0 < \lvert u\rvert - c_{t,\ww} \ll 1           
          \end{cases}. 
\end{equation*}
Since the polar degree of $F_t$ is equal to $d_p$ and $N_i$ is a neighborhood of an orbit of the $S^1$-action, 
a fiber of $F_{t}: N_i \rightarrow D^{2}_{\delta}$ is a $d_p$-fold cover over a fiber of $\lvert F_{t}\rvert$. 
Thus we have 
\begin{equation*}
F_{t}^{-1}(u) \cap N_{i} \cong \begin{cases}                                             
          (\sqcup_{j=1}^{d_{p}} D^{2}_{1,j}) \sqcup (\sqcup_{j=1}^{d_{p}} D^{2}_{2,j}) & 0 < c_{t,\ww} - \lvert u\rvert \ll 1 \\                                                         
          \sqcup_{j=1}^{d_{p}} A_{j} &  0 < \lvert u\rvert - c_{t,\ww} \ll 1           
          \end{cases}, 
\end{equation*}
where $D^{2}_{k, j}$ is a $2$-disk and $A_j$ is an annulus for $k=1, 2$ and $j = 1, \dots, d_{p}$. 
By Lemma \ref{l2}, we may assume that $\sqcup_{j=1}^{d_{p}} D^{2}_{1,j}$ is contained in a connected component of $M_0$ 
which does not contain $\sqcup_{j=1}^{d_{p}} D^{2}_{2,j}$. 
Let $S_{i}$ be the fiber surface of $F_{t}\mid_{\tilde{M}_{i}}$ for $i = 0, 1, \dots, \ell$. 
Set $A' = \sqcup_{j=1}^{d_{p}}A_{j}, \partial A' = \sqcup_{j=1}^{d_{p}}\partial A_{j}$ and 
$D'_{k} = \sqcup_{j=1}^{d_{p}}D^{2}_{k,j}$ for $k = 1, 2$. 
Then $S_i$ is the surface obtained from $S_{i-1}$ by replacing $D'_{1} \sqcup D'_{2}$ by $A'$. 
The Euler characteristic $\chi(S_{i})$ of $S_i$ is equal to $\chi(S_{i-1}) - 2d_{p}$, where 
$d_p$ is the polar degree of $F_t$ for $i = 1, \dots, \ell$. 
Thus we have 
\[
\chi(S_{\ell}) - \chi(S_{0}) = - 2\ell d_{p}. 
\]

\begin{lemma}\label{c3} 
Let $\ell$ be the number of connected components of $S_{1}(F_{t})$. 
Then $\ell$ is equal to $\lvert L^{+}(P, \mathbf{o})\rvert - \lvert L^{+}(F_{t}, \mathbf{o})\rvert$ and also to 
$\lvert L^{-}(P, \mathbf{o})\rvert - \lvert L^{-}(F_{t}, \mathbf{o})\rvert$. 
\end{lemma}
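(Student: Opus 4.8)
The plan is to compute the relevant link-component counts via the Euler-characteristic bookkeeping already established, together with the structure of the fiber surfaces on the two sides of the deformation. The key identity to exploit is $\chi(S_{\ell}) - \chi(S_{0}) = -2\ell d_{p}$, which has just been derived, where $S_{0}$ is the fiber surface of $F_{t}$ over the small circle $\partial D^{2}_{\delta_{t}}$ and $S_{\ell}$ is the fiber surface of $F_{t}$ over $\partial D^{2}_{\delta}$. By Lemma~\ref{l3}, $S_{0}$ is the disjoint union of the Milnor fiber of $(F_{t},\mathbf{o})$ and $\ell$ annuli, so $\chi(S_{0})$ equals $\chi$ of the Milnor fiber of $(F_{t},\mathbf{o})$. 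On the other hand, by Lemma~$1$ the fibration over $\partial D^{2}_{\delta}$ is isomorphic to the Milnor fibration of $P=F_{0}$ at the origin, so $S_{\ell}$ is diffeomorphic to the Milnor fiber of $(P,\mathbf{o})$ and $\chi(S_{\ell})=\chi(\text{Milnor fiber of }(P,\mathbf{o}))$.

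Next I would relate the Euler characteristics of these Milnor fibers to the numbers of positive and negative link components. The idea is that for a $2$-variable polar weighted homogeneous polynomial, the Seifert fiber surface is foliated by $S^{1}$-orbits, and each boundary component of the fiber surface corresponds to a component of the link $L(P,\mathbf{o})$. The monodromy being the $S^{1}$-action of order governed by the polar degree $d_{p}$, one sees that the orbit space of the fiber surface is a planar surface whose Euler characteristic is $\chi(\text{fiber})/d_{p}$ up to the boundary contributions; the positive components contribute with the $S^{1}$-orientation and the negative ones against it. Concretely, I expect a formula of the shape $\chi(\text{Milnor fiber of }(P,\mathbf{o})) = d_{p}\bigl(2 - |L^{+}(P,\mathbf{o})| - |L^{-}(P,\mathbf{o})|\bigr)$ (or a minor variant), and the same relation for $F_{t}$. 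The cleanest route is to analyze the orbit space $S_{\ell}/S^{1}$, which by Lemma~\ref{c1}-type reasoning is a holed $2$-sphere, count its holes as $|L^{+}|+|L^{-}|$, and lift back through the $d_{p}$-fold covering.

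Substituting these two expressions into $\chi(S_{\ell})-\chi(S_{0}) = -2\ell d_{p}$ should produce
\[
d_{p}\bigl((|L^{+}(P,\mathbf{o})|+|L^{-}(P,\mathbf{o})|) - (|L^{+}(F_{t},\mathbf{o})|+|L^{-}(F_{t},\mathbf{o})|)\bigr) = 2\ell d_{p},
\]
after cancelling the constant $2$'s, which gives $\ell$ as half the total drop in link components. To split this into the two separate equalities claimed in the statement, I would argue that each round $1$-handle attachment (equivalently, each indefinite fold component $C_{i}$) merges link components in a way that decreases the positive count and the negative count by exactly the same amount: by condition~$(2)$ the fold is indefinite, so the local picture trades a pair of disks lying in two different components of $M_{0}$ for an annulus, and tracking the induced orientations from the fiber surface shows one positive and one negative boundary circle are consumed per handle on each side. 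This symmetry between positive and negative contributions is what forces $|L^{+}(P,\mathbf{o})|-|L^{+}(F_{t},\mathbf{o})| = |L^{-}(P,\mathbf{o})|-|L^{-}(F_{t},\mathbf{o})| = \ell$ individually, not merely their sum.

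The main obstacle I anticipate is establishing the precise dictionary between boundary circles of the fiber surface and \emph{signed} link components, i.e.\ correctly accounting for orientations so that the positive and negative counts each drop by exactly $\ell$ rather than only their sum dropping by $2\ell$. The Euler-characteristic computation alone controls the total $|L^{+}|+|L^{-}|$; the delicate step is showing that the indefinite (as opposed to definite) nature of the fold singularities in condition~$(2)$ guarantees that each round handle connects a positive component to a negative one, so that the two counts decrease in lockstep. I would handle this by examining the orientation induced by the fiber surface on the two disks $D'_{1}$ and $D'_{2}$ that get replaced by the annulus $A'$, using the explicit normal form $-x_{2}^{2}+x_{3}^{2}+x_{4}^{2}$ to read off that the two attaching regions carry opposite orientations relative to the $S^{1}$-action.
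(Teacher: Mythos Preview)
Your overall strategy---comparing $\chi(S_\ell)$ and $\chi(S_0)$ through the identity $\chi(S_\ell)-\chi(S_0)=-2\ell d_p$ and identifying the two fibers with the Milnor fibers of $(P,\mathbf{o})$ and $(F_t,\mathbf{o})$ respectively---is exactly the paper's route. Two points, however, diverge.

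First, the Euler characteristic formula you propose, $\chi=d_p\bigl(2-|L^+|-|L^-|\bigr)$, is not correct in general: the fiber surface is not a $d_p$-fold cover of a holed sphere (the $S^1$-action does not preserve a single fiber, and the Seifert fibration on $S^3$ has exceptional orbits over $z_1=0$ and $z_2=0$). The paper instead invokes the Eisenbud--Neumann formula
\[
\chi(S_\ell)=1-\bigl(pq(m+n)-p-q\bigr)(m-n),\qquad \chi(S_0)=1-\bigl(pq(m'+n')-p-q\bigr)(m'-n'),
\]
with $m=|L^+(P,\mathbf{o})|$, $n=|L^-(P,\mathbf{o})|$, $m'=|L^+(F_t,\mathbf{o})|$, $n'=|L^-(F_t,\mathbf{o})|$. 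Your simplified formula happens to give the right \emph{difference} only after one already knows $m-n=m'-n'$, which is precisely what you are trying to prove.

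Second, and more importantly, your plan for splitting the sum identity into the two separate equalities is the hard way around. You propose to track orientations through the indefinite fold normal form to argue that each round handle ``consumes'' one positive and one negative component; but the round handles are attached in the interior of $D^4_\varepsilon$ and do not touch the link on $\partial D^4_\varepsilon$ at all, so this picture is not quite right as stated. The paper's argument is a one-liner: since $F_t$ is polar weighted homogeneous with the \emph{same} polar weight $(p,q)$ and polar degree $d_p$ as $P$, and since for a Seifert link of this type $d_p=pq(m-n)$, we get $pq(m-n)=d_p=pq(m'-n')$, hence $m-n=m'-n'$ immediately. Plugging this into the Eisenbud--Neumann difference yields $\chi(S_\ell)-\chi(S_0)=-d_p\bigl((m+n)-(m'+n')\bigr)$, so $2\ell=(m+n)-(m'+n')$, and combining with $m-m'=n-n'$ gives $\ell=m-m'=n-n'$. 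The ``obstacle'' you anticipate simply does not arise once you use that the polar degree is constant along the deformation.
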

\begin{proof}
Since the fibration $F_{t}\mid_{\tilde{M}_{\ell}}$ is isomorphic to 
$P : D^{4}_{\varepsilon} \cap P^{-1}(\partial D^{2}_{\delta}) \rightarrow \partial D^{2}_{\delta}$ and 
$L(P, \mathbf{o})$ is the Seifert link in $\partial D^{4}_{\varepsilon}$, 
we have 
\[
\chi(S_{\ell}) = 1 - \{pq(m+n) - p - q\}(m-n), 
\]
where $m = \lvert L^{+}(P, \mathbf{o})\rvert$ and $n = \lvert L^{-}(P, \mathbf{o})\rvert$, see \cite[Theorem 11.1]{EN}. 
By Lemma \ref{l3}, the fiber surface $S_{0}$ of $F_{t}\mid_{\tilde{M}_{0}}$ 
is diffeomorphic to $S^{0}_{0} \sqcup S^{1}_{0} \sqcup \dots \sqcup S^{k}_{0}$ 
and $S^{j}_{0}$ is an annulus for $j \neq 0$. 
The Euler characteristic $\chi(S_{0})$ of $S_0$ is equal to $\chi(S^{0}_{0})$. 
Since the link $L(F_{t}, \mathbf{o})$ is also the Seifert link in a $3$-sphere, 
$\chi(S^{0}_{0})$ is given by 
\[
\chi(S^{0}_{0}) = 1 - \{pq(m'+n') - p -q\}(m'-n'), 
\] 
where $m' = \lvert L^{+}(F_{t}, \mathbf{o})\rvert$ and $n' = \lvert L^{-}(F_{t}, \mathbf{o})\rvert$.
On the other hand, $\chi(S_{\ell})$ is equal to $\chi(S_{0}) - 2\ell d_{p}$. 
The polar degree $d_p$ is equal to $pq(m-n)$ and also to $pq(m'-n')$ \cite{EN}. 
Then we have 
\begin{equation*}
\begin{split}
\chi(S_{\ell}) - \chi(S_{0}) &= - \{pq(m+n) - p - q\}(m-n) + \{pq(m'+n') - p -q\}(m'-n') \\
                             &= -pq(m-n)(m+n) + pq(m'-n')(m'+n') \\
                             &= -d_{p}\{(m+n)-(m'+n')\} \\
                             &= - 2\ell d_{p}. 
\end{split}
\end{equation*}
So $2\ell$ is equal to $m+n - (m'+n')$. Since $m-m'$ is equal to $n-n'$, 
$\ell$ is equal to $m - m'$ and also to $n-n'$. 
\end{proof}

We give an example of Lemma $\ref{c3}$ which is considered in \cite{In3}. 
\begin{Example}
Set $f(\zz) = z_{1}^{m} + z_{2}^{m}$ and $g(\zz) = z_{1} + 2z_{2}$ where $m \geq 3$. 
We consider a deformation $F_{t} = f(\zz)\overline{g(\zz)} + t(z_{1}^{m}\bar{z}_{1} + z_{1}^{m-1} + \gamma z_{2}^{m-1})$ 
of $f(\zz)\overline{g(\zz)}$ where $\gamma \in \Bbb{C}$. 
In \cite{In3}, we take a~coefficient $\gamma$ of $h(\zz)$ such that 
\[
\overline{\gamma} \neq 
\frac{-(2\alpha'f(z, 1) - mg(z, 1))(mz\bar{z}^{m-1}r^{2} + (m-1)\bar{z}^{m-2} - \alpha' z^{m}r^{2})}
{(m-1)(m\bar{z}^{m-1}g(z, 1) - \alpha' f(z, 1))} 
\]
where $(z^{m}+1)\overline{(z+2)} = \alpha'\overline{(z^{m}+1)}(z+2), \alpha' \in S^1$. 
Then $S_{1}(F_{t})$ is the set of indefinite fold singularities and 
the link $S^{3}_{\varepsilon} \cap F_{t}^{-1}(0)$ is a $(m-1, m-1)$-torus link, 
where 
$S^{3}_{\varepsilon} = \{ (z_{1}, z_{2}) \in \Bbb{C}^{2} \mid  \lvert z_{1} \rvert^{2} + \lvert z_{2} \rvert^{2} = \varepsilon \}$, 
$\varepsilon \ll 1$. 
By Lemma $\ref{c3}$, the number of connected components of $S_{1}(F_{t})$ is equal to~$1$. 
\end{Example}

\begin{proof}[Proof of Theorem 1]
By Lemma $\ref{l2}$ and Lemma $\ref{c2}$, 
$D^{4}_{\varepsilon} \cap F_{t}^{-1}(D^{2}_{\delta_{t}})$ consists of a $4$-ball and $\ell$-copies of 
$S^{1} \times B^{3}$ 
and the image of the attaching map $\varphi_{i}$ of $i$-th round $1$-handle is 
contained in both of the boundary of a $4$-ball and that of $S^{1} \times B^{3}$ for $i=1, \dots, \ell$. 
By Lemma $\ref{c3}$, the number of connected components of $S_{1}(F_{t})$ is equal to 
$\lvert L^{-}(P, \mathbf{o})\rvert - \lvert L^{-}(F_{t}, \mathbf{o})\rvert$. 
\end{proof}

\section{Remarks}
\subsection{Monodromy and characteristic polynomials}
Let $h : S \rightarrow S$ be a homeomorphism of a surface $S$. We define 
\[
\Delta_{*}(h) = \frac{\Delta_{1}(h)}{\Delta_{0}(h)}, 
\]
where $\Delta_{i}(h)$ is the characteristic polynomial of the homological map from 
$H_{i}(S, \Bbb{Z})$ to itself induced by $h$ for $i = 0, 1$. 

Let $h_{i} : S_{i} \rightarrow S_{i}$ be the monodromy of $F_{t}\mid_{\tilde{M}_{i}}$ for $i=1, \dots, \ell$. 
Since $h_{i}$ is given by the $S^1$-action on $\Bbb{C}^{2}$, $h_{i} : S_{i} \rightarrow S_{i}$ satisfies the following conditions:  
\renewcommand{\theenumi}{\Roman{enumi}}
\begin{enumerate}
\item
$h_{i}(S_{i-1} \setminus (D'_{1} \sqcup D'_{2})) = S_{i-1} \setminus (D'_{1} \sqcup D'_{2})$ and 
$h_{i}\mid_{S_{i-1}\setminus (D'_{1} \sqcup D'_{2})} = h_{i-1}\mid_{S_{i-1}\setminus (D'_{1} \sqcup D'_{2})}$, 
\item
$h_{i}\mid_{D'_{k}}$ and $h_{i}\mid_{A'}$ are periodic maps which satisfy 
$D^{2}_{k,j} \rightarrow D^{2}_{k,j+1}$ and 
$A_{j} \rightarrow A_{j+1}$ 
\end{enumerate}
for $i = 1, \dots, \ell, j = 1,\dots, d_{p}$ and $k = 1, 2$. Here $D^{2}_{k,d_{p}+1} = D^{2}_{k,1}$ and $A_{d_{p}+1} = A_{1}$. 
We calculate $\Delta_{*}(h_{i})$ from $\Delta_{*}(h_{i-1})$ by using a round $1$-handle $N_i$.

\begin{lemma}\label{l4} 
Let $S_{i}$ be the fiber surface of $F_{t}\mid_{\tilde{M}_{i}}$ and 
$h_{i} : S_{i} \rightarrow S_{i}$ be the monodromy of $F_{t}\mid_{\tilde{M}_{i}}$ for $i = 1, \dots, \ell$. 
Then the characteristic polynomial of $h_i$ satisfies 
\[
\Delta_{*}(h_{i}) = \Delta_{*}(h_{i-1})(t^{d_{p}}-1)^{2}. 
\]
\end{lemma}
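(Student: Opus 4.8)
The plan is to compare $S_i$ and $S_{i-1}$ by an equivariant Mayer--Vietoris argument. Both surfaces are assembled from the common piece $Y := S_{i-1}\setminus \mathrm{Int}(D'_1\sqcup D'_2)$ by gluing along the same family $C$ of $2d_p$ circles: writing $D' = D'_1\sqcup D'_2$, we have $S_{i-1}\cong Y\cup_{C} D'$ and $S_i\cong Y\cup_{C} A'$, where $C=\partial D'=\partial A'$. By conditions (I) and (II) the monodromies preserve these decompositions, so the associated Mayer--Vietoris sequences become exact sequences of finite-dimensional $\mathbb{Q}$-vector spaces carrying a compatible automorphism induced by $h_i$ (respectively $h_{i-1}$). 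Throughout I write $P_j(X)=\det(tI-h_*\mid H_j(X;\mathbb{Q}))$, so that $\Delta_*(h\mid X)=P_1(X)/P_0(X)$, every surface here having $H_2=0$.

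First I would record the elementary fact that for a finite exact sequence of finite-dimensional $\mathbb{Q}$-vector spaces carrying a compatible automorphism $h$, the alternating product of the characteristic polynomials $\det(tI-h\mid\,\cdot\,)$ equals $1$. Applying this to the six-term Mayer--Vietoris sequence
\[
0\to H_1(C)\to H_1(Y)\oplus H_1(A')\to H_1(S_i)\to H_0(C)\to H_0(Y)\oplus H_0(A')\to H_0(S_i)\to 0
\]
and solving for $P_1(S_i)/P_0(S_i)$ gives
\[
\Delta_*(h_i)=\Delta_*(h_i\mid Y)\cdot\Delta_*(h_i\mid A')\cdot\frac{P_0(C)}{P_1(C)},
\]
while the identical computation for $S_{i-1}=Y\cup_{C}D'$ yields
\[
\Delta_*(h_{i-1})=\Delta_*(h_{i-1}\mid Y)\cdot\Delta_*(h_{i-1}\mid D')\cdot\frac{P_0(C)}{P_1(C)}.
\]

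By condition (I) the restrictions of $h_i$ and $h_{i-1}$ to $Y$ coincide, and both restrict to $C$ as the same permutation coming from the $S^1$-action; hence the $Y$-factors and the $C$-factors are literally identical and cancel when I form the ratio, leaving $\Delta_*(h_i)/\Delta_*(h_{i-1})=\Delta_*(h_i\mid A')/\Delta_*(h_{i-1}\mid D')$. It then remains to evaluate the two local contributions using condition (II). On $A'=\sqcup_{j=1}^{d_p}A_j$ the monodromy cyclically permutes the $d_p$ annuli, so $P_0(A')=P_1(A')=t^{d_p}-1$ and $\Delta_*(h_i\mid A')=1$; on $D'$ it cyclically permutes each of the two families of $d_p$ contractible disks, so $P_1(D')=1$ and $P_0(D')=(t^{d_p}-1)^2$, giving $\Delta_*(h_{i-1}\mid D')=(t^{d_p}-1)^{-2}$. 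The ratio is then $(t^{d_p}-1)^2$, which is the asserted formula.

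The main obstacle I anticipate is the orientation bookkeeping in condition (II): I must verify that the cyclic permutations of the $H_1$-generators of the annuli, and of the $H_0$-generators of the annuli and disks, carry no extra signs, since a sign would replace $t^{d_p}-1$ by $t^{d_p}+1$ or otherwise distort the exponent. This is resolved by observing that the $S^1$-action is orientation preserving and maps each annulus (respectively disk) homeomorphically and orientation-compatibly onto the next member of its cycle, so the induced homology maps are genuine signless cyclic shifts with characteristic polynomial $t^{d_p}-1$. A secondary point to justify carefully is the equivariance of the Mayer--Vietoris maps, which follows from naturality together with the fact that $h_i$ preserves the subsets $Y$, $A'$, and $C$ as wholes even though it permutes their individual components.
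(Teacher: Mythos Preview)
Your proof is correct and follows essentially the same route as the paper's. The paper also uses the multiplicativity of $\Delta_*$ under the decomposition $S_i = Y \cup_{\partial A'} A'$ (and $S_{i-1} = Y \cup D'$), computes that the cyclic permutation matrices on $H_*(A')$, $H_*(\partial A')$, and $H_0(D'_k)$ all have characteristic polynomial $t^{d_p}-1$, and arrives at $\Delta_*(h_i)=\Delta_*(h_i\mid Y)=\Delta_*(h_{i-1})(t^{d_p}-1)^2$; your version simply makes the underlying Mayer--Vietoris argument and the ratio $\Delta_*(h_i)/\Delta_*(h_{i-1})$ more explicit.
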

\begin{proof}
Since $S_i$ is the surface obtained from $S_{i-1}$ by replacing $D'_{1} \sqcup D'_{2}$ by $A'$ 
and $h_i$ satisfies the above properties, 
we have 
\[
\Delta_{*}(h_{i}) = \frac{\Delta_{*}\bigr(h_{i}\mid_{S_{i-1} \setminus (D'_{1} \sqcup D'_{2})}\bigl)
\Delta_{*}\bigr(h_{i}\mid_{A'}\bigl)}
{\Delta_{*}\bigr(h_{i}\mid_{\partial A'}\bigl)}. 
\]
By the condition $(\text{II})$, the monodromy matrices of $H_{0}(D'_{k}, \Bbb{Z}), H_{i}(A', \Bbb{Z})$ and 
$H_{i}(\partial A', \Bbb{Z})$ are equal to 
\begin{eqnarray*}
 \left(
        \begin{array}{@{\,}cccccccc@{\,}}
0 & 1 & 0 & \ldots & 0 \\
0 & 0 & 1 & \ddots & \vdots \\ 
\vdots & \ddots & \ddots & \ddots & \vdots \\
0 & \ldots & 0 & 0 & 1 \\
1 & 0 & \ldots & 0 & 0
\end{array}
       \right)
\end{eqnarray*}
for $k = 1, 2$ and $i = 0, 1$. 
The characteristic polynomial of the above matrix is equal to $t^{d_{p}} - 1$. 
So $\Delta_{*}\bigr(h_{i}\mid_{A'}\bigl)$ and $\Delta_{*}\bigr(h_{i}\mid_{\partial A'}\bigl)$
are equal to $1$. 
As the condition $(\text{I})$, we have
\[
\Delta_{*}\bigr(h_{i}\mid_{S_{i-1} \setminus (D'_{1} \sqcup D'_{2})}\bigl)
\Delta_{*}\bigr(h_{i-1}\mid_{D'_{1}}\bigl)\Delta_{*}\bigr(h_{i-1}\mid_{D'_{2}}\bigl) = 
\Delta_{*}\bigr(h_{i-1}\bigl).
\] 
Thus the characteristic polynomial satisfies  
\begin{equation*}
\begin{split}
\Delta_{*}(h_{i}) 
&= \Delta_{*}\bigr(h_{i}\mid_{S_{i-1} \setminus (D'_{1} \sqcup D'_{2})}\bigl) \\
&= \Delta_{*}(h_{i-1})(t^{d_{p}} - 1)^{2}. 
\end{split}
\end{equation*}
\end{proof}

Since the two fibrations 
$P : D^{4}_{\varepsilon} \cap F_{t}^{-1}(\partial D^{2}_{\delta}) \rightarrow \partial D^{2}_{\delta}$ and 
$F_{t} : D^{4}_{\varepsilon} \cap F_{0}^{-1}(\partial D^{2}_{\delta}) \rightarrow \partial D^{2}_{\delta}$ 
are isomorphic, we have the following theorem. 

\begin{theorem}
Let $h$ be the monodromy of $P : D^{4}_{\varepsilon} \cap P^{-1}(\partial D^{2}_{\delta}) \rightarrow \partial D^{2}_{\delta}$. 
Then $\Delta_{*}(h)$ is equal to $\Delta_{*}(h_{0})(t^{d_{p}}-1)^{2\ell}$, 
where $\ell$ is the number of connected components of $S_{1}(F_{t})$. 
\end{theorem}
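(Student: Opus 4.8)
The plan is to obtain the formula by a telescoping induction built on Lemma \ref{l4}, combined with the identification of the monodromy $h$ of the Milnor fibration of $P$ with the monodromy $h_{\ell}$ of $F_{t}\mid_{\tilde{M}_{\ell}}$.

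First I would invoke the isomorphism of fibrations recorded just above the statement: the Milnor fibration $P : D^{4}_{\varepsilon} \cap P^{-1}(\partial D^{2}_{\delta}) \rightarrow \partial D^{2}_{\delta}$ is isomorphic to $F_{t}\mid_{\tilde{M}_{\ell}}$, as already used in the proof of Lemma \ref{c3}. An isomorphism of fibrations conjugates their geometric monodromies, so $h$ and $h_{\ell}$ induce conjugate maps on each homology group $H_{i}(-, \Bbb{Z})$ for $i = 0, 1$; since the characteristic polynomials $\Delta_{0}$ and $\Delta_{1}$ are invariant under conjugation, so is their ratio $\Delta_{*}$. Hence $\Delta_{*}(h) = \Delta_{*}(h_{\ell})$.

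Next I would apply Lemma \ref{l4} repeatedly. For each $i = 1, \dots, \ell$ the lemma gives
\[
\Delta_{*}(h_{i}) = \Delta_{*}(h_{i-1})(t^{d_{p}}-1)^{2},
\]
and telescoping these equalities from $i = 1$ to $i = \ell$ yields $\Delta_{*}(h_{\ell}) = \Delta_{*}(h_{0})(t^{d_{p}}-1)^{2\ell}$. Combining this with $\Delta_{*}(h) = \Delta_{*}(h_{\ell})$ gives $\Delta_{*}(h) = \Delta_{*}(h_{0})(t^{d_{p}}-1)^{2\ell}$, which is the claim.

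I do not expect any serious obstacle here, since the substantive content is entirely carried by Lemma \ref{l4}, which is already established: it is the local, one-round-handle computation that multiplies $\Delta_{*}$ by the factor $(t^{d_{p}}-1)^{2}$ each time a round $1$-handle $N_{i}$ is attached. The only point requiring a word of care is the invariance of $\Delta_{*}$ under isomorphism of fibrations, and this is formal, being a consequence of the conjugation-invariance of characteristic polynomials. The theorem is therefore best viewed as the clean global consequence of Lemma \ref{l4}, obtained by iterating the one-handle formula over all $\ell$ connected components of $S_{1}(F_{t})$.
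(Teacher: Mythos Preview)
Your proposal is correct and follows exactly the paper's approach: the paper states the isomorphism of the two fibrations immediately before the theorem and then asserts the theorem as a direct consequence, with Lemma~\ref{l4} supplying the one-handle factor $(t^{d_p}-1)^2$ that you telescope over $i=1,\dots,\ell$. Your write-up simply makes explicit the two steps (identifying $h$ with $h_\ell$, then iterating Lemma~\ref{l4}) that the paper leaves implicit.
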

\begin{remark}
The algebraic set $P^{-1}(0) \cap \partial D^{4}_{\varepsilon}$ is a fibered Seifert link in the $3$-sphere. 
Thus the characteristic polynomial of the monodromy of the Milnor fibration of $P$ at the origin 
can also be calculated from the splice diagram \cite{EN}. 
\end{remark}

\subsection{A specific deformation of $f\bar{g}$}
We introduce a deformation of $f\bar{g}$ given in \cite{In3}, where $f$ and $g$ are $2$-variables convenient 
complex polynomials and $f\overline{g}$ has an isolated singularity at the origin $\mathbf{o}$. 
We define the $\Bbb{C}^{*}$-action on $\Bbb{C}^{2}$ as follows: 
\[
c \circ (z_{1}, z_{2}) := (c^{q}z_{1}, c^{p}z_{2}), \ \ c \in \Bbb{C}^{*}. 
\]
Assume that $f(\zz)$ and $g(\zz)$ satisfy
\[ 
f(c \circ \zz) = c^{pqm}f(\zz), \ \ g(c \circ \zz) = c^{pqn}g(\zz), \ \ m > n. 
\]
Then $f(\zz)$ and $g(\zz)$ are weighted homogeneous polynomials. 
Two complex polynomials $f(\zz)$ and $g(\zz)$ can be written as 
\[
 f(\zz) =  \textstyle\prod\limits_{j=1}^{m} (z^{p}_{1} + \alpha_{j}z^{q}_{2}), \ \  
 g(\zz) = \textstyle\prod\limits_{j=1}^{n}(z^{p}_{1} + \beta_{j}z^{q}_{2}), \ \  \gcd(p, q) = 1, 
\]
where $\alpha_{j} \neq \alpha_{j'}, \beta_{j} \neq \beta_{j'} \ (j \neq j')$ and 
$\alpha_{k} \neq \beta_{k'}$ for $1 \leq k \leq m$ and $1 \leq k' \leq n$. 
The mixed polynomial $f(\zz)\overline{g(\zz)}$ is a polar and radial weighted homogeneous polynomial, 
i.e., $f(\zz)\overline{g(\zz)}$ satisfies that 
$f(s \circ \zz)\overline{g(s \circ \zz)} = s^{pq(m - n)}f(\zz)\overline{g(\zz)}$ and 
$f(r \circ \zz)\overline{g(r \circ \zz)} = r^{pq(m + n)}f(\zz)\overline{g(\zz)}$, 
where $s \in S^{1}$ and $r \in \Bbb{R}^{*}$ \cite{O1}. 
We define a deformation of $f(\zz)\overline{g(\zz)}$ as follows: 
\[
F_{t}(\zz) := f(\zz)\overline{g(\zz)} + th(\zz),
\]
where $0 < t \ll 1$ and  
\begin{equation*}
h(\zz) = \begin{cases}
            \gamma_{1}z_{1}^{p(m-n)} + \gamma_{2}z_{2}^{q(m-n)}  & \text{$(g(\zz) \neq z_{1} + \beta z_{2})$}, \\
            z_{1}^{m}\overline{z}_{1} + z_{1}^{m-1} + \gamma z_{2}^{m-1} & \text{$(g(\zz) = z_{1} + \beta z_{2})$}. 
         \end{cases}
\end{equation*} 
Then $F_{t}(\zz)$ is also a polar weighted homogeneous polynomial with the polar degree $pq(m-n)$. 
By \cite[Theorem 1]{In3}, there exists $h(\zz)$ such that $F_{t}(\zz)$ satisfies the conditions $(1), (2)$ and $(3)$ 
for $0 < t \ll 1$. 
The above deformation $F_t$ of $f\bar{g}$ satisfies that $\lvert L^{-}(F_{t}, \mathbf{o})\rvert = 0$. 
By Lemma $\ref{c3}$, the number $\ell$ of connected components of $S_{1}(F_{t})$ is equal to $n$. 
Since the radial degree $d_r$ and the polar degree $d_p$ are equal to $pq(m+n)$ and $pq(m-n)$ respectively, 
we have the following proposition. 
\begin{proposition}
Let $F_t$ be the above deformation of $f\bar{g}$. 
Then the number $\ell$ of connected components of $S_{1}(F_{t})$ is equal to $\frac{d_{r} - d_{p}}{2pq}$, 
where $d_r$ is the radial degree of $f\bar{g}$ and $d_p$ is the polar degree of $f\bar{g}$. 
\end{proposition}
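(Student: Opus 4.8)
The plan is to reduce the statement to the identity $\ell = n$, which is already available from Lemma \ref{c3}, and then to carry out the elementary arithmetic relating $n$ to the two degrees $d_r$ and $d_p$. Since this $F_t$ is shown in \cite{In3} to satisfy conditions $(1), (2)$ and $(3)$, all of the preceding results apply.

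First I would apply Lemma \ref{c3}, which gives $\ell = \lvert L^{-}(f\bar{g}, \mathbf{o})\rvert - \lvert L^{-}(F_{t}, \mathbf{o})\rvert$. For this particular deformation the link of $F_t$ has no negative components, i.e. $\lvert L^{-}(F_{t}, \mathbf{o})\rvert = 0$, as recorded above and established in \cite{In3}. Thus $\ell = \lvert L^{-}(f\bar{g}, \mathbf{o})\rvert$. The negative components of $L(f\bar{g}, \mathbf{o})$ are precisely those arising from the conjugated factor $\bar{g}$, and since $g = \prod_{j=1}^{n}(z_1^p + \beta_j z_2^q)$ has $n$ distinct branches, we obtain $\ell = n$.

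Next I would invoke the two degrees of $f\bar{g}$ read off from the $\Bbb{C}^{*}$-action: since $f(s\circ\zz)\overline{g(s\circ\zz)} = s^{pq(m-n)}f(\zz)\overline{g(\zz)}$ and $f(r\circ\zz)\overline{g(r\circ\zz)} = r^{pq(m+n)}f(\zz)\overline{g(\zz)}$, the polar and radial degrees are $d_p = pq(m-n)$ and $d_r = pq(m+n)$ respectively. Subtracting gives $d_r - d_p = pq(m+n) - pq(m-n) = 2pqn$, so that $\frac{d_r - d_p}{2pq} = n$. Combining this with $\ell = n$ yields the desired equality $\ell = \frac{d_r - d_p}{2pq}$.

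Since every ingredient is supplied by Lemma \ref{c3}, by the explicit factorizations of $f$ and $g$, and by the two weighted-homogeneity identities, there is no genuine obstacle here: the content of the proposition is simply that the combinatorial count $\ell = n$ agrees with the degree-theoretic quantity $\frac{d_r-d_p}{2pq}$, and both equal $n$. The only point requiring care is the identification $\lvert L^{-}(f\bar{g}, \mathbf{o})\rvert = n$, which relies on the fact that the orientation induced on the branches of $\bar{g}$ by the fiber surface is opposite to that of the $S^1$-action, so that exactly these $n$ branches are counted as negative components.
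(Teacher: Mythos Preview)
Your proposal is correct and follows essentially the same route as the paper: the paper also records that $\lvert L^{-}(F_{t}, \mathbf{o})\rvert = 0$, applies Lemma~\ref{c3} to get $\ell = n$, and then uses $d_r = pq(m+n)$ and $d_p = pq(m-n)$ to conclude. Your additional remark justifying $\lvert L^{-}(f\bar{g}, \mathbf{o})\rvert = n$ via the branches of $\bar{g}$ makes explicit what the paper leaves implicit, but the argument is the same.
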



\begin{thebibliography}{99}

\bibitem{As}
  D. Asimov, 
  \textit{Round handles and non-singular Morse-Smale flows}, 
  Ann. Math. $\mathbf{102}$ (1975), 41--54. 
  


\bibitem{C}
  J. L. Cisneros-Molina,
  \textit{Join theorem for polar weighted homogeneous singularities},
  Singularities II, edited by J.~P.~Brasselet, J. L. Cisneros-Molina, D. Massey, J. Seade and B. Teissier,  
   Contemp, Math. $\mathbf{475}$, Amer. Math. Soc., Providence, RI, 2008, 43--59. 


\bibitem{EN}
  D. Eisenbud and W. Neumann,
  Three-Dimensional Link Theory and Invariants of Plane Curve Singularities,
  Annals of Mathematics Studies 110, Princeton University Press, Princeton, N.J., 1985.










\bibitem{In1}
  K. Inaba,
  \textit{On the enhancement to the Milnor number of a class of mixed polynomials},
  J. Math. Soc. Japan. $\mathbf{66}$ (2014), 25--36.  

\bibitem{In2}
  K. Inaba,
  \textit{On fibered links of singularities of polar weighted homogeneous mixed polynomials},
  Singularities in Geometry and Topology 2011, Advanced Studies in Pure Mathematics $\mathbf{66}$, pp. 81--92, 2015. 
  
\bibitem{In3}
  K. Inaba,
  \textit{On deformations of isolated singularities of polar weighted homogeneous mixed polynomials},
  Osaka J. Math. $\mathbf{53}$ (2016), 813--842. 



\bibitem{K}
  A. Kas,
  \textit{On the handlebody decomposition associated to a Lefschetz fibration},
  Pacific J. Math. $\mathbf{89}$ (1980), 89--104.


\bibitem{L1}
  H. Levine,
  \textit{Elimination of cusps},
  Topology $\mathbf{3}$ (1965), 263--296.

\bibitem{Lg}
  E.J.N. Looijenga,
  Isolated Singular Points on Complete Intersections,
  London Mathematical Society Lecture Note Series 77, Cambridge University Press, Cambridge, 1970. 



\bibitem{Mi}
  J. Milnor,
  \textit{Singular points of complex hypersurfaces},
  Annales of Mathematics Studies 61, Princeton University Press, Princeton, N.J., 1968.

\bibitem{M}
  J W. Morgan, 
  \textit{Non-singular Morse-Smale flows on $3$-dimensional manifolds}, 
  Topology $\mathbf{18}$ (1978), 41--53.


\bibitem{NR1}
  W. Neumann and L. Rudolph,
  \textit{Unfoldings in knot theory},
  Math. Ann. $\mathbf{278}$ (1987), 409--439.


\bibitem{NR3}
  W. Neumann and L. Rudolph,
  \textit{Difference index of vectorfields and the enhanced Milnor number},
  Topology $\mathbf{29}$ (1990), 83--100.
  
\bibitem{O1}
  M. Oka,
  \textit{Topology of polar weighted homogeneous hypersurfaces},
  Kodai Math. J. $\mathbf{31}$ (2008), 163--182.
  
\bibitem{O2}
  M. Oka,
  \textit{Non-degenerate mixed functions},
  Kodai Math. J. $\mathbf{33}$ (2010), 1--62.
  
  
  
\bibitem{RSV}
  M. A. S. Ruas, J. Seade and A. Verjovsky,
  \textit{On real singularities with a Milnor fibration},
  Trends Math., edited by A.~Libgober and M. Tib$\breve{\text{a}}$r, Birkh$\ddot{\text{a}}$user, Basel, 2003, 191--213.
  
  
  
  
  


\bibitem{W}
  J. A. Wolf, 
  \textit{Differentiable fibre spaces and mappings compatible with Riemannian metrics}, 
  Michigan Math. J. $\mathbf{11}$ (1964), 65--70. 


\end{thebibliography}
\end{document}